\documentclass[reqno]{amsart}
\usepackage{amsmath}

\usepackage{graphicx}
\usepackage{amscd}
\usepackage{amsmath, amssymb}
\usepackage{graphics}
\usepackage{graphicx}
\usepackage{epsfig}
\usepackage{xcolor}
\usepackage{soul}
\usepackage{tikz}

\newtheorem{theorem}{Theorem}

\newtheorem{corollary}[theorem]{Corollary}

\newtheorem{definition}[theorem]{Definition}

\newtheorem{lemma}[theorem]{Lemma}

\newtheorem{proposition}[theorem]{Proposition}

\newtheorem{question}[theorem]{Question}

\begin{document}

\title[On the irreducibility of $SB_n$ representations]{On the irreducibility of singular braid group representations}

\author{Mohamad N. Nasser}

\address{Mohamad N. Nasser\\
         Department of Mathematics and Computer Science\\
         Beirut Arab University\\
         P.O. Box 11-5020, Beirut, Lebanon}
         
\email{m.nasser@bau.edu.lb}

\begin{abstract}
In this article, we investigate the irreducibility of representations of the singular braid group on $n$ strands, denoted by $SB_n$. We first show that all representations $\rho: SB_2 \to \mathrm{GL}_2(\mathbb{C})$ are reducible. We then determine all irreducible representations $\rho: SB_3 \to \mathrm{GL}_2(\mathbb{C})$ and construct several families of irreducible representations $\rho: SB_3 \to \mathrm{GL}_3(\mathbb{C})$. Finally, we study the irreducibility of homogeneous local representations of $SB_n$ for all $n \geq 3$.
\end{abstract}

\maketitle

\renewcommand{\thefootnote}{}
\footnote{\textit{Key words and phrases.} Braid Group; Singular Braid Group; Group Representations; Irreducible Representations.}
\footnote{\textit{Mathematics Subject Classification.} Primary: 20F36.}

\vspace*{-0.4cm}

\section{Introduction} %%%%%%%%%% Section 1 %%%%%%%%%%%%%

The braid group on $n$ strands, denoted by $B_n$, is one of the most fundamental algebraic structures in mathematics, with deep connections to algebra, topology, and geometry. It was introduced by E. Artin in \cite{E.A} as the group of equivalence classes of braids, where a braid consists of $n$ vertical strands that may cross over or under one another, and two braids are considered equivalent if they are related by a continuous deformation. Since its introduction, the braid group has found numerous applications across mathematics and physics. In Knot Theory, it provides a framework for studying links through braid closures \cite{K.Mu}; in Algebraic Geometry, it plays a central role in the study of monodromy \cite{D.Co}; in Mathematical Physics, it arises in the description of anyons \cite{Y.Ch}; and in Quantum Computing, it serves as a foundation for topological quantum computation \cite{C.DE}.

\vspace*{0.1cm}

The singular braid monoid on $n$ strands, denoted by $SM_n$, is an extension of the braid group $B_n$ obtained by adjoining a family of singular generators $\tau_1,\tau_2,\ldots,\tau_{n-1}$ to the classical Artin generators $\sigma_1,\sigma_2,\ldots,\sigma_{n-1}$. It was introduced by J. Birman in \cite{J.Bir}. Later, R. Fenn et al. proved in \cite{R.Fe} that $SM_n$ embeds into a group, called the singular braid group and denoted by $SB_n$, which is defined by the same generators and relations as $SM_n$. By incorporating singular crossings, $SB_n$ extends the classical braid group framework and has become an important object of study in several areas of mathematics. For further details on the singular braid monoid and singular braid group, we refer the reader to \cite{O.Da,O.Das,B.Gem}.

\vspace*{0.1cm}

The classification of irreducible representations of a group is a fundamental problem in representation theory, particularly for the braid group $B_n$ and its extensions. Irreducible representations reveal important structural properties of groups and play a significant role in several areas of mathematics and physics, including quantum mechanics and particle physics (see, for example, \cite{J.Yang}). In \cite{E.F}, E. Formanek classified all irreducible complex representations $\rho: B_n \longrightarrow \mathrm{GL}_r(\mathbb{C})$ for $n \geq 7$ and $2 \leq r \leq n-1$. This result was later generalized by M. Nasser et al. in \cite{M.R.A} to the group of conjugating automorphisms of a free group, $C_n$, which is an extension of $B_n$. More precisely, they classified all irreducible complex representations $\rho: C_n \longrightarrow \mathrm{GL}_r(\mathbb{C})$ for $n \geq 7$ and $2 \leq r \leq n-3$. In a different direction, I. Sysoeva classified several families of irreducible complex representations of the welded braid group $WB_n$, which is isomorphic to $C_n$ \cite{I.Syso}.

\vspace*{0.1cm}

Regarding the irreducibility of representations of $SB_n$, M. Nasser investigated in \cite{MMM1} the irreducibility of extensions to $SB_n$ of the Wada representation of $B_n$ introduced in \cite{wada}. He also studied in \cite{MMM2} the irreducibility of representations of $SB_n$ extending the standard representation of $B_n$ defined in \cite{D.T}. Motivated by these works, the present paper investigates the irreducibility of representations of $SB_n$ from a broader perspective.

\vspace*{0.1cm}

In section 2, we give an overview for the targeted groups in this paper, $B_n$ and $SB_n$, and we present preliminaries regarding our results. In section 3, we introduce our first result, where we show that all representations $\rho: SB_2 \longrightarrow \mathrm{GL}_2(\mathbb{C})$ are reducible. Our second result is in Section 4, where we classify the forms of all irreducible representations $\rho: SB_3 \longrightarrow \mathrm{GL}_2(\mathbb{C})$. Also, we present the forms of some irreducible representations $\rho: SB_3 \longrightarrow \mathrm{GL}_3(\mathbb{C})$. In Section 5, we present our third result, where we study the irreducibility of the homogeneous local representations of $SB_n$ for all $n\geq 3$. More deeply, we prove that every homogeneous local representation of $SB_n$ having one of the forms $\rho_1$ and $\rho_2$ is reducible. Also, we find necessary and sufficient conditions for the homogeneous local representations of $SB_n$ having the form $\rho_3$ to be irreducible.

\vspace*{0.15cm}

\section{Overview} %%%%%%%%%% Section 2 %%%%%%%%%%%%%

First, we give the presentation of the braid group $B_n$ introduced by E. Artin.

\begin{definition} \cite{E.A}
The braid group on $n$ strands, $B_n$, is the group defined by the Artin generators $\sigma_1,\sigma_2,\ldots,\sigma_{n-1}$ with the following defining relations.
\begin{align}
\sigma_i\sigma_{i+1}\sigma_i &= \sigma_{i+1}\sigma_i\sigma_{i+1},
& 1\leq i\leq n-2,\label{eqs1}\\
\sigma_i\sigma_j &= \sigma_j\sigma_i,
& |i-j|\geq 2.\label{eqs2}
\end{align}
\end{definition}

Now, we give the presentation of the singular braid monoid $SM_n$ introduced by J. Birman.

\begin{definition} \cite{J.Bir}
The singular braid monoid on $n$ strands, $SM_n$, is the monoid defined by the non-singular generators $\sigma_1,\sigma_2, \ldots,\sigma_{n-1}$ of $B_n$ in addition to the singular generators $\tau_1,\tau_2, \ldots, \tau_{n-1}$. In addition to the relations \eqref{eqs1} and \eqref{eqs2} above, the generators $\sigma_i$ and $\tau_i$ of $SM_n$ satisfy the following relations.
\begin{align}
\tau_i\tau_j &= \tau_j\tau_i,
& |i-j|\geq 2,\label{eqs3}\\
\tau_i\sigma_j &= \sigma_j\tau_i,
& |i-j|\geq 2,\label{eqs4}\\
\tau_i\sigma_i &= \sigma_i\tau_i,
& 1\leq i\leq n-1,\label{eqs5}\\
\sigma_i\sigma_{i+1}\tau_i &= \tau_{i+1}\sigma_i\sigma_{i+1},
& 1\leq i\leq n-2,\label{eqs6}\\
\sigma_{i+1}\sigma_i\tau_{i+1} &= \tau_i\sigma_{i+1}\sigma_i,
& 1\leq i\leq n-2.\label{eqs7}
\end{align}
\end{definition}

In \cite{R.Fe}, R. Fenn, E. Keyman, and C. Rourke proved that $SM_n$ embeds into the group $SB_n$, which is called the singular braid group and has the same generators and defining relations as $SM_n$. In the following figures, we show the geometrical shapes of the generators of $SB_n$.

\begin{center}
\begin{tikzpicture}
 	\draw[thick] (-1.5,0)--(-1.5,2); 
    \fill (-1,1) circle(1.5pt) (-1.2,1) circle(1.5pt)(-0.8,1)circle(1.5pt);       
    \draw[thick] (-.5,0)--(-.5,2);
    \draw[thick] (2.5,0)--(2.5,2); 
    \fill (2,1) circle(1.5pt) (2.2,1) circle(1.5pt)
    (1.8,1)circle(1.5pt);       
    \draw[thick] (1.5,0)--(1.5,2);
    \draw[thick] (0,0) to[out=90, in=-90] (1,2);
    \draw[thick, white, line width=4pt] (1,0) to[out=90, in=-90] (0,2); % overlap to create crossing
    \draw[thick] (1,0) to[out=90, in=-90] (0,2);
    \node[above] at(0,2){$i$};
    \node[above] at(1,2){$i+1$};
	\node at (3.5,1){,};  
 	\draw[thick] (5.5,0)--(5.5,2); 
    \fill (5,1) circle(1.5pt) (4.8,1) circle(1.5pt)(5.2,1)circle(1.5pt);       
    \draw[thick] (4.5,0)--(4.5,2);
    \draw[thick] (8.5,0)--(8.5,2); 
    \fill (8,1) circle(1.5pt) (8.2,1) circle(1.5pt)(7.8,1)circle(1.5pt);       
    \draw[thick] (7.5,0)--(7.5,2);    
    \draw[thick] (7,0) to[out=90, in=-90] (6,2);
    \draw[thick, white, line width=4pt] (6,0) to[out=90, in=-90] (7,2); % overlap to create crossing
    \draw[thick] (6,0) to[out=90, in=-90] (7,2);
    \node[above] at(6,2){$i$};
    \node[above] at (7,2) {$i+1$};
    \node at (3.5, -0.5) {The generators \(\sigma_i\) and \(\sigma_i^{-1}\)};
\end{tikzpicture}
\end{center}

\begin{center}
\begin{tikzpicture}
	\draw[thick] (5.5,0)--(5.5,2); 
    \fill (5,1) circle(1.5pt) (4.8,1) circle(1.5pt)(5.2,1)circle(1.5pt);       
    \draw[thick] (4.5,0)--(4.5,2);
    \draw[thick] (8.5,0)--(8.5,2); 
    \fill (8,1) circle(1.5pt) (8.2,1) circle(1.5pt)(7.8,1)circle(1.5pt);       
    \draw[thick] (7.5,0)--(7.5,2);    
	\draw[thick] (7,0) to[out=90, in=-90] (6,2);
    \draw[thick, white, line width=4pt] (6,0) to[out=90, in=-90] (7,2); % overlap to create crossing
    \draw[thick] (6,0) to[out=90, in=-90] (7,2);
    \draw[draw=black,fill=white] (6.5,1) circle (2pt); % Bold intersection point    
    \node[above] at(6,2){$i$};
    \node[above] at (7,2) {$i+1$};
	\node at (3.5,1){,};    
 	\draw[thick] (-1.5,0)--(-1.5,2); 
    \fill (-1,1) circle(1.5pt) (-1.2,1) circle(1.5pt)(-0.8,1)circle(1.5pt);       
    \draw[thick] (-.5,0)--(-.5,2);
    \draw[thick] (2.5,0)--(2.5,2); 
    \fill (2,1) circle(1.5pt) (2.2,1) circle(1.5pt)(1.8,1)circle(1.5pt);       
    \draw[thick] (1.5,0)--(1.5,2);
    \draw[thick] (0,0) to[out=90, in=-90] (1,2);
    \draw[thick, white, line width=4pt] (1,0) to[out=90, in=-90] (0,2); % overlap to create crossing
    \draw[thick] (1,0) to[out=90, in=-90] (0,2);
    \fill[black] (0.5,1) circle (2pt); % Bold intersection point
    \node[above] at(0,2){$i$};
    \node[above] at(1,2){$i+1$};
    \node at (3.5, -0.5) {The generators \(\tau_i\) and \(\tau_i^{-1}\)};
\end{tikzpicture}
\end{center}
 
Now, we give, in the next two definitions, the concept of an important type of representation of $B_n$ and $SB_n$, namely local representations.

\begin{definition}
A representation $\mu: B_n \longrightarrow \mathrm{GL}_{m}(\mathbb{C})$ is called local if it has the form
$$\mu(\sigma_i) =\left( \begin{array}{c|@{}c|c@{}}
   \begin{matrix}
     I_{i-1} 
   \end{matrix} 
      & 0 & 0 \\
      \hline
    0 &\hspace{0.2cm} \begin{matrix}
   		M_i
   		\end{matrix}  & 0  \\
\hline
0 & 0 & I_{n-i-1}
\end{array} \right) \hspace*{0.2cm} \text{for} \hspace*{0.2cm} 1\leq i\leq n-1,$$ 
where $M_i \in \mathrm{GL}_k(\mathbb{C})$ with $k=m-n+2$ and $I_r$ is the $r\times r$ identity matrix. We say that the representation $\mu$ is homogeneous if all the matrices $M_i$ are equal.
\end{definition}

\begin{definition} \label{LocalSBn}
A representation $\rho: SB_n \longrightarrow \mathrm{GL}_{m}(\mathbb{C})$ is called local if it has the form
$$\rho(\sigma_i)=\left( \begin{array}{c|@{}c|c@{}}
   \begin{matrix}
     I_{i-1} 
   \end{matrix} 
      & 0 & 0 \\
      \hline
    0 &\hspace{0.2cm} \begin{matrix}
   		M_i
   		\end{matrix}  & 0  \\
\hline
0 & 0 & I_{n-i-1}
\end{array} \right) \hspace*{0.2cm} \text{for} \hspace*{0.2cm} 1\leq i\leq n-1$$
and
$$\rho(\tau_i) =\left( \begin{array}{c|@{}c|c@{}}
   \begin{matrix}
     I_{i-1} 
   \end{matrix} 
      & 0 & 0 \\
      \hline
    0 &\hspace{0.2cm} \begin{matrix}
   		N_i
   		\end{matrix}  & 0  \\
\hline
0 & 0 & I_{n-i-1}
\end{array} \right) \hspace*{0.2cm} \text{for} \hspace*{0.2cm} 1\leq i\leq n-1,$$ 
where $M_i, N_i \in \mathrm{GL}_k(\mathbb{C})$ with $k=m-n+2$ and $I_r$ is the $r\times r$ identity matrix. The representation $\rho$ is said to be homogeneous if all the matrices $M_i$ are equal and all the matrices $N_i$ are equal.
\end{definition}

The representations given in the following four definitions are famous examples of homogeneous local representations of the braid group $B_n$ to $\mathrm{GL}_m(\mathbb{C})$ in two cases: $m=n$ and $m=n+1$.
 
\begin{definition} \cite{W.B} \label{defBurau}
The Burau representation $\mu_B: B_n\longrightarrow \mathrm{GL}_n(\mathbb{C})$ is the representation defined by
$$\sigma_i\longmapsto \left( \begin{array}{c|@{}c|c@{}}
   \begin{matrix}
     I_{i-1} 
   \end{matrix} 
      & 0 & 0 \\
      \hline
    0 &\hspace{0.2cm} \begin{matrix}
   	1-t & t\\
   	1 & 0\\
\end{matrix}  & 0  \\
\hline
0 & 0 & I_{n-i-1}
\end{array} \right) \hspace*{0.2cm} \text{for} \hspace*{0.2cm} 1\leq i\leq n-1.$$ 
\end{definition}

\begin{definition} \cite{wada} \label{defwada}
Let $k$ be a non-zero integer. The Wada representation of type $1$, $\mu_w^{(k)}: B_n\longmapsto \mathrm{GL}_n(\mathbb{C})$, is the representation given by
$$\sigma_i\longrightarrow \left( \begin{array}{c|@{}c|c@{}}
   \begin{matrix}
     I_{i-1} 
   \end{matrix} 
      & 0 & 0 \\
      \hline
    0 &\hspace{0.2cm} \begin{matrix}
   	1-t^k & t^k\\
   	1 & 0\\
\end{matrix}  & 0  \\
\hline
0 & 0 & I_{n-i-1}
\end{array} \right) \hspace*{0.2cm} \text{for} \hspace*{0.2cm} 1\leq i\leq n-1.$$ 
\end{definition}

\begin{definition}\cite{D.T}
The standard representation $\mu_S: B_n\to \mathrm{GL}_n(\mathbb{C})$ is the representation given by
$$\sigma_i \longmapsto \left( \begin{array}{c|@{}c|c@{}}
   \begin{matrix}
     I_{i-1} 
   \end{matrix} 
      & 0 & 0 \\
      \hline
    0 &\hspace{0.2cm} \begin{matrix}
   	0 & t\\
   	1 & 0\\
\end{matrix}  & 0  \\
\hline
0 & 0 & I_{n-i-1}
\end{array} \right) \hspace*{0.2cm} \text{for} \hspace*{0.2cm} 1\leq i \leq n-1.$$ 
\end{definition}

\begin{definition} \cite{V.B2016} \label{defF}
The $F$-representation $\mu_F: B_n \longrightarrow \mathrm{GL}_{n+1}(\mathbb{C})$ is the representation defined by
$$\sigma_i\longmapsto \left( \begin{array}{c|@{}c|c@{}}
   \begin{matrix}
     I_{i-1} 
   \end{matrix} 
      & 0 & 0 \\
      \hline
    0 &\hspace{0.2cm} \begin{matrix}
   		1 & 1 & 0 \\
   		0 &  -t & 0 \\   		
   		0 &  t & 1 \\
   		\end{matrix}  & 0  \\
\hline
0 & 0 & I_{n-i-1}
\end{array} \right) \hspace*{0.2cm} \text{for} \hspace*{0.2cm} 1\leq i\leq n-1.$$ 
\end{definition}

\vspace*{0.1cm}

For more information on the irreducibility of these four representations of $B_n$, see \cite{E.F}, \cite{I.S}, \cite{A1}, and \cite{M.N2024} respectively.

\vspace*{0.1cm}

We now propose another type of representation of $SB_n$ known as the $\Phi$-type extension representation. We start by presenting a proposition made by Bardakov, Chbili, and Kozlovskaya in \cite{13}.

\vspace*{0.1cm}

\begin{proposition} \cite{13}
Let $\mu: B_n \longrightarrow \mathrm{GL}_n(\mathbb{C})$ be a representation of $B_n$ and let $a,b,c \in \mathbb{C}$ with $abc\neq 0$. Then, the map $\Phi_{a,b,c}: SB_n\longrightarrow \mathrm{GL}_n(\mathbb{C})$ which acts on the generators of $SB_n$ by the rules
\begin{align*}
&\Phi_{a,b,c}(\sigma_i^{\pm 1})=\mu(\sigma_i^{\pm 1}) \hspace{0.2cm} \text{and} \hspace{0.2cm} \Phi_{a,b,c}(\tau_i)=a\mu(\sigma_i)+b\mu(\sigma_i^{-1})+cI_n,\hspace{0.2cm} i=1,2,\ldots,n-1,
\end{align*}
defines a representation of $SB_n$.
\end{proposition}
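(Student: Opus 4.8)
The plan is to verify directly that $\Phi_{a,b,c}$ carries each defining relation of $SB_n$ to a valid matrix identity. Since $SB_n$ is presented by the generators $\sigma_i,\tau_i$ and the relations (\ref{eqs1})--(\ref{eqs7}), and since the group axioms as well as the invertibility of the images of the generators are built into the codomain $GL_n(\mathbb{C})$, checking (\ref{eqs1})--(\ref{eqs7}) is all that is required. Throughout I would write $A_i := \mu(\sigma_i) \in GL_n(\mathbb{C})$, so that $\Phi_{a,b,c}(\sigma_i^{\pm 1}) = A_i^{\pm 1}$ and $\Phi_{a,b,c}(\tau_i) = aA_i + bA_i^{-1} + cI_n$. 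The one structural remark that makes everything go through is that $\Phi_{a,b,c}(\tau_i)$ is a Laurent polynomial in the single matrix $A_i$.

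First, relations (\ref{eqs1}) and (\ref{eqs2}) involve only the $\sigma$'s, so they hold because $\mu$ is already a representation of $B_n$. For (\ref{eqs5}), $\Phi_{a,b,c}(\tau_i)$ is a polynomial in $A_i$ and hence commutes with $A_i = \Phi_{a,b,c}(\sigma_i)$. For (\ref{eqs3}) and (\ref{eqs4}), when $|i-j|\ge 2$ relation (\ref{eqs2}) for $\mu$ gives $A_iA_j = A_jA_i$, so $A_i^{\pm 1}$ and $A_j^{\pm 1}$ commute in all combinations, and therefore any polynomial in $A_i^{\pm 1}$ commutes with any polynomial in $A_j^{\pm 1}$ — which is precisely what (\ref{eqs3}) (two singular generators) and (\ref{eqs4}) (one singular and one non-singular generator) assert.

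The substantive part is (\ref{eqs6}) and (\ref{eqs7}). For (\ref{eqs6}), expanding both sides in $a,b,c$ gives
\[
A_iA_{i+1}\,\Phi_{a,b,c}(\tau_i) = aA_iA_{i+1}A_i + bA_iA_{i+1}A_i^{-1} + cA_iA_{i+1},
\]
\[
\Phi_{a,b,c}(\tau_{i+1})\,A_iA_{i+1} = aA_{i+1}A_iA_{i+1} + bA_{i+1}^{-1}A_iA_{i+1} + cA_iA_{i+1}.
\]
The $c$-parts agree trivially; the $a$-parts agree by the braid relation $A_iA_{i+1}A_i = A_{i+1}A_iA_{i+1}$ (relation (\ref{eqs1}) for $\mu$); and the $b$-parts agree because that same braid relation can be rewritten as $A_iA_{i+1}A_i^{-1} = A_{i+1}^{-1}A_iA_{i+1}$. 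Relation (\ref{eqs7}) is then handled by the mirror-image computation, using the rewriting $A_{i+1}A_iA_{i+1}^{-1} = A_i^{-1}A_{i+1}A_i$ of (\ref{eqs1}). Once (\ref{eqs1})--(\ref{eqs7}) are all verified, $\Phi_{a,b,c}$ extends to a homomorphism $SB_n \to GL_n(\mathbb{C})$, i.e.\ a representation.

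I do not expect a genuine obstacle: the computation is short, and the only points needing a moment's care are (i) noticing that the $b$-terms in (\ref{eqs6}) and (\ref{eqs7}) are governed by the conjugated forms $A_iA_{i+1}A_i^{-1} = A_{i+1}^{-1}A_iA_{i+1}$ and $A_{i+1}A_iA_{i+1}^{-1} = A_i^{-1}A_{i+1}A_i$ of the braid relation rather than by the braid relation in its original shape, and (ii) the pedantic observation that, to land in $GL_n(\mathbb{C})$ and hence to define a representation of the \emph{group} $SB_n$, one needs $aA_i + bA_i^{-1} + cI_n$ to be invertible — which is exactly what the stated codomain presupposes.
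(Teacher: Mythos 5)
Your proof is correct. The paper itself offers no argument for this proposition --- it is quoted from \cite{13} --- so the direct verification you give (checking relations (\ref{eqs1})--(\ref{eqs7}) one by one, with the $b$-terms of (\ref{eqs6}) and (\ref{eqs7}) handled by the conjugated forms $A_iA_{i+1}A_i^{-1}=A_{i+1}^{-1}A_iA_{i+1}$ and $A_{i+1}A_iA_{i+1}^{-1}=A_i^{-1}A_{i+1}A_i$ of the braid relation) is exactly the intended argument and is complete. Your closing remark about invertibility is not merely pedantic: for the statement to hold for the \emph{group} $SB_n$ with codomain $GL_n(\mathbb{C})$, one genuinely needs $a\mu(\sigma_i)+b\mu(\sigma_i^{-1})+cI_n$ to be invertible, which fails for special choices of $a,b,c$; the proposition as stated elides this hypothesis, and you are right to flag it.
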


\vspace*{0.1cm}

Now, we introduce the following proposition regarding the relation between the irreducibility of a representation of $B_n$ and its $\Phi$-type extensions to $SB_n$. First, we need the following lemma

\begin{lemma} \label{lemmaaaa}
For $n\in \mathbb{N}^*$, let $\rho: G \to \mathrm{GL}_n(V)$ be a representation, where $G$ is a group and $V$ is a vector space, and let $H$ be a subgroup of $G$. If the restriction of $\rho$ to $H$ is irreducible, then $\rho$ is irreducible. 
\end{lemma}

\begin{proof}
Suppose, to get a contradiction, that $\rho$ is reducible; then, there exists a non-trivial invariant subspace $S$ of $V$ such that $\rho(g)(s)\in S$ for every $s\in S$ and $g\in G$. This implies that $\rho(h)(s)\in S$ for every $s\in S$ and $h\in H\subset G$. Hence, $S$ is an invariant subspace of $V$ with respect to the restriction of $\rho$ to $H$, which is a contradiction.
\end{proof}

\begin{proposition}
Let $\mu: B_n \longrightarrow \mathrm{GL}_n(\mathbb{C})$ be a representation of $B_n$ and $\Phi_{a,b,c}$ be a $\Phi$-type extension representation of $\mu$ to $SB_n$, where $a,b,c \in \mathbb{C}$ with $abc\neq 0$. Then, $\mu$ is irreducible if and only if $\Phi_{a,b,c}$ is irreducible. 
\end{proposition}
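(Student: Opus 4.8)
The plan is to prove the stronger statement that $\mu$ and $\Phi_{a,b,c}$ have precisely the same invariant subspaces of $\mathbb{C}^n$; the equivalence of irreducibility is then immediate, since irreducibility of a representation on $\mathbb{C}^n$ means exactly that there is no invariant subspace other than $\{0\}$ and $\mathbb{C}^n$.

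First I would note that $SB_n$ is generated by $\sigma_1^{\pm1},\ldots,\sigma_{n-1}^{\pm1}$ together with $\tau_1^{\pm1},\ldots,\tau_{n-1}^{\pm1}$, so a subspace $W\subseteq\mathbb{C}^n$ is $\Phi_{a,b,c}$-invariant if and only if it is invariant under every $\Phi_{a,b,c}(\sigma_i^{\pm1})=\mu(\sigma_i^{\pm1})$ and every $\Phi_{a,b,c}(\tau_i^{\pm1})$. Here I would use the elementary fact that for a linear operator $T$ acting invertibly on the finite-dimensional space $\mathbb{C}^n$, any $T$-invariant subspace $W$ is automatically $T^{-1}$-invariant, because $T|_W$ is then an injective, hence bijective, endomorphism of $W$.

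Now suppose $W$ is $\mu$-invariant, i.e.\ invariant under all $\mu(\sigma_i)$. By the remark just made, $W$ is also invariant under all $\mu(\sigma_i^{-1})$, and therefore under $\Phi_{a,b,c}(\tau_i)=a\mu(\sigma_i)+b\mu(\sigma_i^{-1})+cI_n$, being a linear combination of operators preserving $W$. Since $\Phi_{a,b,c}(\tau_i)\in GL_n(\mathbb{C})$ and $\Phi_{a,b,c}(\tau_i^{-1})=\Phi_{a,b,c}(\tau_i)^{-1}$, the same finite-dimensional argument shows $W$ is $\Phi_{a,b,c}(\tau_i^{-1})$-invariant. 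Hence $W$ is $\Phi_{a,b,c}$-invariant. Conversely, any $\Phi_{a,b,c}$-invariant $W$ is invariant under $\Phi_{a,b,c}(\sigma_i)=\mu(\sigma_i)$ for all $i$, hence $\mu$-invariant. Thus the lattices of invariant subspaces coincide, and $\mu$ is irreducible if and only if $\Phi_{a,b,c}$ is.

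There is no serious obstacle in this argument; the only point that genuinely needs to be invoked (rather than waved away) is the passage from invariance under $\sigma_i$ and $\tau_i$ to invariance under $\sigma_i^{-1}$ and $\tau_i^{-1}$, which relies on finite-dimensionality and on the images lying in $GL_n(\mathbb{C})$. Everything else is bookkeeping about generators of $SB_n$ and about linear combinations of operators preserving a common subspace.
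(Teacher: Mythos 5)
Your argument is correct and is precisely the argument the paper has in mind: the paper's own proof consists only of the remark that the claim is ``straightforward from the construction of $\Phi_{a,b,c}$,'' and your writeup (matching invariant-subspace lattices, using that $\Phi_{a,b,c}(\tau_i)$ is a linear combination of $\mu(\sigma_i)$, $\mu(\sigma_i)^{-1}$, and $I_n$, together with the finite-dimensionality argument for invariance under inverses) is exactly the omitted detail. Nothing further is needed.
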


\begin{proof}
The necessary condition follows directly. Indeed, if $\mu$ is irreducible, then by Lemma \ref{lemmaaaa} we obtain that the representation $\Phi_{a,b,c}$ is also irreducible.

For the sufficient condition, suppose that $\mu$ is reducible. Then there exists a non-trivial proper subspace $U \subset \mathbb{C}^n$ that is invariant under $\mu$. This means that for every $u \in U$ and for every generator $\sigma_i$ with $1 \leq i \leq n-1$, we have $\mu(\sigma_i)(u) \in U.$ Since $U$ is a linear subspace and $\mu(\sigma_i)$ is invertible, it follows that $\mu(\sigma_i^{-1})(u) \in U \text{ for all } u \in U \text{ and } 1 \leq i \leq n-1.$ Moreover, the identity map $I_n$ clearly preserves $U$, so for all $u \in U$ we also have $I_n(u) \in U$. As the representation $\Phi_{a,b,c}$ is defined in terms of $\mu(\sigma_i)$, $\mu(\sigma_i^{-1})$, and $I_n$, each of which leaves $U$ invariant, we conclude that $\Phi_{a,b,c}(\tau_i)(u) \in U \text{ for all } u \in U \text{ and } 1 \le i \le n-1.$ Therefore, $U$ is an invariant subspace for $\Phi_{a,b,c}$, and hence $\Phi_{a,b,c}$ is reducible.
\end{proof}

\vspace*{0.15cm}

\section{Irreducible Complex Representations of $SB_2$ of Degree $2$} %%%%%% Section 3 %%%%%%%%

In this section, we aim to classify the forms of all irreducible representations $\rho: SB_2 \longrightarrow \mathrm{GL}_2(\mathbb{C})$. First, in the following theorem, we classify all representations $\rho: SB_2 \longrightarrow \mathrm{GL}_2(\mathbb{C})$.

\begin{theorem} \label{ThSB2}
Let $\rho: SB_2 \longrightarrow \mathrm{GL}_2(\mathbb{C})$ be a representation. Then, $\rho$ is equivalent to one of the following four representations.
\begin{itemize}
\item[(1)] $\rho_1: SB_2 \longrightarrow \mathrm{GL}_2(\mathbb{C})$ such that
$$\rho_1(\sigma_1)=\left( \begin{matrix}
w & x \\
y & z
\end{matrix}\right) \text{ and }\ 
\rho_1(\tau_1)=\left( \begin{matrix}
a & b \\
\frac{by}{x} & \frac{ax-bw+bz}{x}
\end{matrix} \right),$$
where $a,b,w,x,y,z \in \mathbb{C}, a^2x-abw+abz\neq b^2y, wz\neq xy, x\neq 0.$
\vspace*{0.1cm}
\item[(2)] $\rho_2: SB_2 \longrightarrow \mathrm{GL}_2(\mathbb{C})$ such that
$$\rho_2(\sigma_1)=\left( \begin{matrix}
w & 0 \\
y & z
\end{matrix}\right) \text{ and }\ 
\rho_2(\tau_1)=\left( \begin{matrix}
a & 0 \\
c & \frac{ay-cw+cz}{y}
\end{matrix} \right),$$
where $a,c,w,y,z \in \mathbb{C},a\neq 0, w\neq 0, y\neq 0, z\neq 0, ay-cw+cz\neq 0.$
\vspace*{0.1cm}
\item[(3)] $\rho_3: SB_2 \longrightarrow \mathrm{GL}_2(\mathbb{C})$ such that
$$\rho_3(\sigma_1)=\left( \begin{matrix}
w & 0 \\
0 & w
\end{matrix}\right) \text{ and }\ 
\rho_3(\tau_1)=\left( \begin{matrix}
a & b \\
c & d
\end{matrix} \right),$$
where $a,b,c,d,w \in \mathbb{C}, ad\neq bc, w\neq 0$.
\vspace*{0.1cm}
\item[(4)] $\rho_4: SB_2 \longrightarrow \mathrm{GL}_2(\mathbb{C})$ such that
$$\rho_4(\sigma_1)=\left( \begin{matrix}
w & 0 \\
0 & z
\end{matrix}\right) \text{ and }\ 
\rho_4(\tau_1)=\left( \begin{matrix}
a & 0 \\
0 & d
\end{matrix} \right),$$
where $a,d,w,z \in \mathbb{C}^*.$
\end{itemize}
\end{theorem}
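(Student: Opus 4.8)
The plan is to first pin down the abstract group $SB_2$ and thereby reduce the statement to a linear-algebra problem about pairs of commuting invertible $2\times2$ matrices. Specialising the defining relations of $SM_n$ to $n=2$, relations \eqref{eqs1}, \eqref{eqs2}, \eqref{eqs3}, \eqref{eqs4}, \eqref{eqs6}, \eqref{eqs7} are all vacuous, and \eqref{eqs5} gives exactly $\sigma_1\tau_1=\tau_1\sigma_1$; hence $SB_2=\langle\sigma_1,\tau_1\mid\sigma_1\tau_1=\tau_1\sigma_1\rangle\cong\mathbb{Z}^2$. Consequently a representation $\rho\colon SB_2\to GL_2(\mathbb{C})$ is precisely the data of an ordered pair $(A,B)$ with $A=\rho(\sigma_1)$, $B=\rho(\tau_1)$ in $GL_2(\mathbb{C})$ satisfying $AB=BA$, and two such representations are equivalent iff the pairs are simultaneously conjugate.

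Next I would write $A=\bigl(\begin{smallmatrix}w&x\\y&z\end{smallmatrix}\bigr)$ and $B=\bigl(\begin{smallmatrix}a&b\\c&d\end{smallmatrix}\bigr)$ and expand $AB=BA$ entrywise. The $(1,1)$ and $(2,2)$ entries give the same equation, so the relation collapses to the three conditions $xc=by$, $\ b(w-z)=x(a-d)$, and $\ y(a-d)=c(w-z)$. I would then run a case analysis on the off-diagonal part of $A$, in each case solving this small system for the entries of $B$ and finally imposing $\det A\neq0$ and $\det B\neq0$. (i) If $x\neq0$, the first two equations force $c=\tfrac{by}{x}$ and $d=\tfrac{ax-bw+bz}{x}$, the third then being automatic; together with $wz\neq xy$ and $a^2x-abw+abz\neq b^2y$ (which is $x\det B\neq0$) this is exactly the form $\rho_1$. (ii) If $x=0$ and $y\neq0$, the first equation gives $b=0$ and the third gives $d=\tfrac{ay-cw+cz}{y}$, while $A,B\in GL_2(\mathbb{C})$ forces $w,z\neq0$, $a\neq0$, $ay-cw+cz\neq0$; this is $\rho_2$. (iii) If $x=y=0$ and $w\neq z$, the last two equations force $b=c=0$, so $B$ is diagonal and we get $\rho_4$. (iv) If $x=y=0$ and $w=z$, then $A=wI$ commutes with every matrix and $B$ is an arbitrary element of $GL_2(\mathbb{C})$, giving $\rho_3$.

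Since the four cases are exhaustive, every representation already has one of the four listed forms for a suitable choice of parameters, so the conclusion holds even on the nose (and a fortiori up to equivalence); I would remark that the list is in fact slightly redundant — e.g. conjugating a representation of type $\rho_2$ by $\bigl(\begin{smallmatrix}0&1\\1&0\end{smallmatrix}\bigr)$ moves it into the shape of $\rho_1$ — but it is convenient to keep the four shapes separate for the irreducibility analysis in the next section.

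I do not expect a serious obstacle: the whole content is the short computation $AB=BA$ plus careful bookkeeping. The two points needing attention are (a) verifying that the presentation of $SB_2$ genuinely reduces to the single commutation relation, and (b) checking in each case that the stated open conditions on the parameters are exactly equivalent to $A,B\in GL_2(\mathbb{C})$ — in particular that the third commutation equation is truly redundant in case (i) and consistent in the others — so that no parameter ends up over- or under-constrained.
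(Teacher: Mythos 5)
Your proposal is correct and follows essentially the same route as the paper: reduce $SB_2$ to the single commutation relation $\sigma_1\tau_1=\tau_1\sigma_1$, expand $AB=BA$ into three scalar equations, and case-split on the entries of $\rho(\sigma_1)$ (you organize the $x=0$ subcases by $y$ and by whether $w=z$, while the paper splits on $b$ and $y$, but the content is identical). Your added checks — that the third equation is automatic when $x\neq0$ and that the parameter constraints are exactly the invertibility conditions — are correct and slightly more careful than the paper's write-up.
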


\begin{proof}
Set $\rho(\sigma_1)=\left( \begin{matrix}
w & x \\
y & z
\end{matrix}\right) \text{ and } \rho(\tau_1)=\left( \begin{matrix}
a & b \\
c & d
\end{matrix} \right),$ where $a,b,c,d,w,x,y,z \in \mathbb{C},$ with $ad\neq bc$ and $wz\neq xy.$ Recall that $SB_2$ is generated by the two generators $\sigma_1$ and $\tau_1$ with the only relation: $\sigma_1\tau_1=\tau_1\sigma_1$. So, we get that $\rho(\sigma_1)\rho(\tau_1)=\rho(\tau_1)\rho(\sigma_1)$. Applying this equation, we obtain the following system of three equations and eight unknowns.
\begin{equation} \label{eq1}
-cx+by=0,
\end{equation}
\begin{equation}\label{eq2}
-bw+ax-dx+bz=0,
\end{equation}
\begin{equation}\label{eq3}
cw-ay+dy-cz=0.
\end{equation}
We solve this system of equations by considering the following cases.
\begin{itemize}
\item[(a)] If $x=0$, then, by Equation (\ref{eq1}), we get $b=0$ or $y=0$.
\begin{itemize}
\item[•]If $b=0$ and $y\neq 0$, then, using Equation (\ref{eq3}), we can see that $\rho$ is equivalent to $\rho_2$.
\item[•]If $b\neq 0$ and $y=0$, then, using Equation (\ref{eq2}), we can see that $\rho$ is equivalent to $\rho_3$.
\item[•]If $b=y=0$, then, using Equation (\ref{eq3}), we can see that $\rho$ is equivalent to a special case of $\rho_3$ if $c\neq 0$ and $\rho$ is equivalent to $\rho_4$ if $c=0$.
\end{itemize}
\item[(b)] If $x\neq 0$, then, using the three equations above, we can see that $\rho$ is equivalent to $\rho_1$.
\end{itemize}
\end{proof}

Now, we study the irreducibility of the representations $\rho_i, 1\leq i \leq 4$, determined in Theorem \ref{ThSB2}. First, we need the following lemma.

\begin{lemma} \label{Lemma1}
Let $\rho: SB_2 \longrightarrow \mathrm{GL}_2(\mathbb{C})$ be a representation. Then, $\rho$ is reducible if and only if $\rho(\sigma_1)$ and $\rho(\tau_1)$ have a common eigenvector.
\end{lemma}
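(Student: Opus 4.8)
The plan is to reduce the statement to the elementary fact that a two-dimensional representation is reducible exactly when it possesses a one-dimensional invariant subspace, together with the observation that $SB_2$ is generated by the two elements $\sigma_1$ and $\tau_1$. So the first step I would record is that a subspace $W \subseteq \mathbb{C}^2$ is $\rho(SB_2)$-invariant if and only if it is invariant under both $\rho(\sigma_1)$ and $\rho(\tau_1)$: since $\sigma_1$ and $\tau_1$ generate $SB_2$ (recall the presentation used in the proof of Theorem \ref{ThSB2}), invariance of $W$ under these two matrices propagates to invariance under $\rho(g)$ for every $g \in SB_2$, and the converse is trivial.

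For the forward implication, suppose $\rho$ is reducible, so there is a subspace $W$ with $0 \neq W \subsetneq \mathbb{C}^2$ that is $\rho(SB_2)$-invariant. Since $\dim_{\mathbb{C}} \mathbb{C}^2 = 2$, necessarily $\dim_{\mathbb{C}} W = 1$, say $W = \mathbb{C}v$ with $v \neq 0$. By the first step, $\rho(\sigma_1)v \in W$ and $\rho(\tau_1)v \in W$, which means $\rho(\sigma_1)v = \lambda v$ and $\rho(\tau_1)v = \mu v$ for suitable $\lambda, \mu \in \mathbb{C}$ (indeed $\lambda, \mu \in \mathbb{C}^*$, as $\rho(\sigma_1)$ and $\rho(\tau_1)$ are invertible). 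Thus $v$ is a common eigenvector of $\rho(\sigma_1)$ and $\rho(\tau_1)$.

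Conversely, if $v \neq 0$ satisfies $\rho(\sigma_1)v = \lambda v$ and $\rho(\tau_1)v = \mu v$ for some scalars $\lambda, \mu$, then the line $W = \mathbb{C}v$ is invariant under both generators, hence under all of $\rho(SB_2)$ by the first step; since $W$ is a proper nonzero $\rho(SB_2)$-invariant subspace, $\rho$ is reducible. Combining the two implications gives the equivalence.

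I do not expect any genuine obstacle: the entire argument is the linear-algebra fact that a two-dimensional representation is reducible precisely when it has an invariant line, packaged with the two-generator presentation of $SB_2$. The only point requiring a moment of care is to state explicitly that invariance under the generating set $\{\sigma_1, \tau_1\}$ is equivalent to invariance under the whole group, so that one need only test the two matrices $\rho(\sigma_1)$ and $\rho(\tau_1)$.
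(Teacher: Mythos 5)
Your proof is correct and is exactly the argument the paper has in mind (its own proof is simply ``Trivial''): a degree-$2$ representation is reducible iff it admits an invariant line, and since $\sigma_1,\tau_1$ generate $SB_2$, an invariant line is the same as a common eigenvector. The only micro-point worth noting is that invariance under the generators also forces invariance under their inverses, which holds because $\rho(\sigma_1)v=\lambda v$ with $\lambda\neq 0$ gives $\rho(\sigma_1)^{-1}v=\lambda^{-1}v$ (and likewise for $\tau_1$), so the propagation to all of $SB_2$ is indeed immediate.
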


\begin{proof}
Trivial.
\end{proof}

\begin{theorem} \label{ThSB22}
The representations $\rho_i: SB_2 \longrightarrow \mathrm{GL}_2(\mathbb{C})$, $1\leq i \leq 4$, determined in Theorem \ref{ThSB2} are reducible.
\end{theorem}

\begin{proof}
We consider each case separately.
\begin{itemize}
\item[(1)] In the case of $\rho_1$, we have the following two subcases.\vspace*{0.1cm}
\begin{itemize}
\item[•] If $y=0$, then, the column vector $(1,0)^T$ is a common eigenvector of $\rho_1(\sigma_1)$ and $\rho_1(\tau_1)$. Thus, by Lemma \ref{Lemma1}, $\rho_1$ is reducible.\vspace*{0.1cm}
\item[•] If $y\neq 0$, then, the column vector $\left(\frac{w-z-\sqrt{w^2+4xy-2wz+z^2}}{2y},1\right)^T$ is a common eigenvector of $\rho_1(\sigma_1)$ and $\rho_1(\tau_1)$. Thus, by Lemma \ref{Lemma1}, $\rho_1$ is reducible.\vspace*{0.1cm}
\end{itemize}
\item[(2)] In the case of $\rho_2$, we have the column vector $(0,1)^T$ is a common eigenvector of $\rho_2(\sigma_1)$ and $\rho_2(\tau_1)$. Thus, by Lemma \ref{Lemma1}, $\rho_2$ is reducible.\vspace*{0.1cm}
\item[(3)] In the case of $\rho_3$, every nonzero eigenvector of $\rho_3(\tau_1)$ is also an eigenvector of $\rho_3(\sigma_1)$, since $\rho_3(\sigma_1)$ is a scalar matrix. Therfore, by Lemma \ref{Lemma1}, $\rho_4$ is reducible.\vspace*{0.1cm}
\item[(4)] Finally, in the case of $\rho_4$, we have the column vector $(1,0)^T$ is a common eigenvector of $\rho_4(\sigma_1)$ and $\rho_4(\tau_1)$. Thus, by Lemma \ref{Lemma1}, $\rho_4$ is reducible.
\end{itemize}
\end{proof}

As a conclusion of this section, we show that all representations of the form $\rho: SB_2 \longrightarrow \mathrm{GL}_2(\mathbb{C})$ are reducible.

\section{Irreducible Complex Representations of $SB_3$ of Degrees $2$ and $3$} %%%%%% Section 4 %%%%%%%%

In this section, we aim to classify the forms of the irreducible representations $\rho: SB_3 \longrightarrow \mathrm{GL}_2(\mathbb{C})$ and $\rho: SB_3 \longrightarrow \mathrm{GL}_3(\mathbb{C})$.

\subsection{Irreducible representations of $SB_3$ of degree $2$}
We classify the forms of all irreducible representations $\rho: SB_3 \longrightarrow \mathrm{GL}_2(\mathbb{C})$. First, we introduce a theorem which was proved by I. Tuba and H. Wenzl in \cite{I.Tuba}. They classified all irreducible representations $\mu: B_3 \longrightarrow \mathrm{GL}_2(\mathbb{C})$.
 
\begin{theorem}\cite{I.Tuba} \label{irred2}
Let $\mu: B_3 \longrightarrow \mathrm{GL}_2(\mathbb{C})$ be an irreducible representation of $B_3$, then, $\mu$ acts on the generators of $B_3$, $\sigma_1$ and $\sigma_2$, as follows.
$$\mu(\sigma_1) =\left( \begin{array}{@{}c@{}}
  \begin{matrix}
   		\lambda_1 & \lambda_1 \\
   		0 & \lambda_2 \\
   		\end{matrix}
\end{array} \right) \text{ and \ } 
\mu(\sigma_2) =\left( \begin{array}{@{}c@{}}
  \begin{matrix}
   		\lambda_2 & 0 \\
   		-\lambda_2 & \lambda_1 \\
   		\end{matrix}
\end{array} \right),$$
where $\lambda_1,\lambda_2 \in \mathbb{C}^*$ with $\lambda_1^2+\lambda_2^2-\lambda_1\lambda_2\neq 0$.
\end{theorem}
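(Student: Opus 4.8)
The plan is to work directly with the two matrices $A=\mu(\sigma_1)$ and $B=\mu(\sigma_2)$, which are invertible and satisfy the single defining relation $ABA=BAB$, and to reduce them to the stated form by a careful choice of basis followed by the braid relation. The first step is to observe that $\sigma_1$ and $\sigma_2$ are conjugate in $B_3$: setting $w=\sigma_1\sigma_2$ and using the braid relation one checks that $w\sigma_1 w^{-1}=\sigma_2$. Hence $A$ and $B$ are conjugate matrices, so they share the same characteristic polynomial and therefore the same pair of eigenvalues, which I call $\lambda_1,\lambda_2$; these are nonzero because $A$ is invertible. I would also record that $A$ cannot be scalar: if $A=\lambda I$ then $ABA=\lambda^2 B$ while $BAB=\lambda B^2$, so the relation forces $B=\lambda I$, making every line invariant and contradicting irreducibility.

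The key step is the choice of basis. I would take $e_1$ to be an eigenvector of $A$ for the eigenvalue $\lambda_1$ and $e_2$ to be an eigenvector of $B$ for the eigenvalue $\lambda_1$. These are linearly independent, since if they were proportional that common line would be invariant under both $A$ and $B$, contradicting irreducibility. In the basis $\{e_1,e_2\}$ the relation $Ae_1=\lambda_1 e_1$ makes the first column of $A$ equal to $(\lambda_1,0)^T$, and $Be_2=\lambda_1 e_2$ makes the second column of $B$ equal to $(0,\lambda_1)^T$; since the traces of $A$ and $B$ both equal $\lambda_1+\lambda_2$, the remaining diagonal entries are both $\lambda_2$, giving $A=\left(\begin{smallmatrix}\lambda_1 & p\\ 0 & \lambda_2\end{smallmatrix}\right)$ and $B=\left(\begin{smallmatrix}\lambda_2 & 0\\ s & \lambda_1\end{smallmatrix}\right)$ for some $p,s\in\mathbb{C}$. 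Irreducibility forces $p\neq0$ and $s\neq0$, because $p=0$ would make $e_2$ a common eigenvector and $s=0$ would make $e_1$ one. Conjugating by a diagonal matrix $\operatorname{diag}(\alpha,\beta)$ rescales $p\mapsto(\alpha/\beta)p$ and $s\mapsto(\beta/\alpha)s$, so I may normalize $p=\lambda_1$, bringing $A$ to exactly the stated shape.

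It then remains to impose $ABA=BAB$. Expanding both sides is a routine computation; I expect the $(1,2)$ entry to yield $\lambda_1^2(\lambda_2+s)=0$, hence $s=-\lambda_2$, with the other three entries automatically consistent with this value. This produces precisely $B=\left(\begin{smallmatrix}\lambda_2 & 0\\ -\lambda_2 & \lambda_1\end{smallmatrix}\right)$, so $A$ and $B$ have the claimed normal form. Finally, to derive the inequality $\lambda_1^2+\lambda_2^2-\lambda_1\lambda_2\neq0$, I would determine exactly when the normalized pair is reducible by listing the eigenvectors of $A$ and of $B$ and testing for a common one; the $\lambda_1$-eigenvectors never coincide (that would force $\lambda_1=0$ or $\lambda_2=0$), and a short determinant computation shows the two $\lambda_2$-eigenvectors are proportional if and only if $\lambda_1^2-\lambda_1\lambda_2+\lambda_2^2=0$. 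Since $\mu$ is assumed irreducible, this quantity must be nonzero, completing the classification. The point that requires genuine care, as opposed to the mechanical braid computation, is the basis choice in the second paragraph: selecting eigenvectors of the two \emph{different} generators for the \emph{same} eigenvalue $\lambda_1$ is what simultaneously pins down both triangular shapes and makes the subsequent relation collapse to a single scalar equation.
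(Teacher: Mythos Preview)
The paper does not give its own proof of this theorem; it is quoted as a result of Tuba and Wenzl \cite{I.Tuba} and used as a black box. So there is nothing in the paper to compare your argument against.

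Your argument itself is correct and complete. The conjugacy $w\sigma_1 w^{-1}=\sigma_2$ with $w=\sigma_1\sigma_2$ does follow from the braid relation, so $A$ and $B$ share eigenvalues; the basis choice (an eigenvector of $A$ and an eigenvector of $B$ for the \emph{same} eigenvalue) is exactly the right move and works even when $\lambda_1=\lambda_2$, since non-scalarity plus irreducibility still give two independent eigenlines. The braid relation then indeed collapses to $\lambda_1^2(\lambda_2+s)=0$ from the $(1,2)$ entry, forcing $s=-\lambda_2$, with the remaining entries consistent. The reducibility test at the end is also correct: the only possible common eigenline is the $\lambda_2$-eigenline, and the proportionality condition is precisely $\lambda_1^2-\lambda_1\lambda_2+\lambda_2^2=0$.

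One cosmetic slip: under conjugation by $\operatorname{diag}(\alpha,\beta)$ the off-diagonal entries transform as $p\mapsto(\beta/\alpha)p$ and $s\mapsto(\alpha/\beta)s$, the reverse of what you wrote; this is harmless since you only use that $p$ can be rescaled to $\lambda_1$. Your approach is in fact essentially the one used in the Tuba--Wenzl paper itself.
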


Now, we extend all irreducible representations $\mu: B_3 \longrightarrow \mathrm{GL}_2(\mathbb{C})$ to $SB_3$.

\begin{theorem} \label{Theooo}
Let $\rho: SB_3 \longrightarrow \mathrm{GL}_2(\mathbb{C})$ be a representation of $SB_3$ that extends the representation $\mu$ given in Theorem \ref{irred2}, then, $\rho$ acts on the generators of $SB_3$, $\sigma_1, \sigma_2, \tau_1$ and $\tau_2$, as follows.
$$\rho(\sigma_1)=\mu(\sigma_1), \rho(\sigma_2)=\mu(\sigma_2), \rho(\tau_1) =\left( \begin{array}{@{}c@{}}
  \begin{matrix}
   		a_1 & b_1 \\
   		0 & d_1 \\
   		\end{matrix}
\end{array} \right) \text{ and } \rho(\tau_2) =\left( \begin{array}{@{}c@{}}
  \begin{matrix}
   		a_2 & 0 \\
   		c_2 & d_2 \\
   		\end{matrix}
\end{array} \right),$$
where $a_1, b_1, d_1, a_2, c_2, d_2, \lambda_1, \lambda_2 \in \mathbb{C}, a_1\neq 0, d_1 \neq 0, a_2\neq 0, d_2\neq 0, \lambda_1\neq 0, \lambda_2\neq 0, \lambda_1^2+\lambda_2^2-\lambda_1\lambda_2\neq 0, d_1=a_2=a_1-\frac{b_1(\lambda_1-\lambda_2)}{\lambda_1}, c_2=-\frac{b_1\lambda_2}{\lambda_1}, d_2=a_1$.
\end{theorem}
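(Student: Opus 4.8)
The plan is to start from the given matrices $\mu(\sigma_1),\mu(\sigma_2)$ and a general upper/lower-triangular-free ansatz
$\rho(\tau_1)=\left(\begin{smallmatrix} a_1 & b_1 \\ c_1 & d_1\end{smallmatrix}\right)$,
$\rho(\tau_2)=\left(\begin{smallmatrix} a_2 & b_2 \\ c_2 & d_2\end{smallmatrix}\right)$,
and then impose, one relation at a time, the defining relations of $SB_3$ that involve $\tau_1,\tau_2$: namely $\tau_1\sigma_1=\sigma_1\tau_1$, $\tau_2\sigma_2=\sigma_2\tau_2$ (relation \eqref{eqs5}), and the mixed relations $\sigma_1\sigma_2\tau_1=\tau_2\sigma_1\sigma_2$ and $\sigma_2\sigma_1\tau_2=\tau_1\sigma_2\sigma_1$ (relations \eqref{eqs6}, \eqref{eqs7} for $n=3$, $i=1$). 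Relation \eqref{eqs3} and \eqref{eqs4} are vacuous for $n=3$. The commutativity $\tau_1\sigma_1=\sigma_1\tau_1$ forces $\rho(\tau_1)$ to be a polynomial in $\mu(\sigma_1)$, hence (since $\mu(\sigma_1)$ is upper triangular with distinct or repeated eigenvalues — here we should note $\lambda_1\neq\lambda_2$ is forced later) $\rho(\tau_1)$ is upper triangular: $c_1=0$. Symmetrically $\tau_2\sigma_2=\sigma_2\tau_2$ forces $\rho(\tau_2)$ to commute with the lower-triangular $\mu(\sigma_2)$, giving $b_2=0$. This is where the shapes $\left(\begin{smallmatrix}a_1 & b_1\\0 & d_1\end{smallmatrix}\right)$ and $\left(\begin{smallmatrix}a_2 & 0\\c_2 & d_2\end{smallmatrix}\right)$ come from.

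Next I would plug these reduced forms into the two mixed relations. Compute $A:=\mu(\sigma_1)\mu(\sigma_2)$ and note (from Theorem~\ref{irred2}) that $A$ is an explicit invertible $2\times 2$ matrix; the relation $\sigma_1\sigma_2\tau_1=\tau_2\sigma_1\sigma_2$ becomes $A\,\rho(\tau_1)=\rho(\tau_2)\,A$, i.e. $\rho(\tau_2)=A\,\rho(\tau_1)\,A^{-1}$. Writing this out entrywise yields linear equations relating $a_2,c_2,d_2$ to $a_1,b_1,d_1$ and the $\lambda_i$; I expect these to produce exactly $d_1 = a_1 - \tfrac{b_1(\lambda_1-\lambda_2)}{\lambda_1}$ for the conjugate to again be lower triangular, and then $a_2 = d_1$, $c_2 = -\tfrac{b_1\lambda_2}{\lambda_1}$, $d_2 = a_1$. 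Then I would check that the remaining relation $\sigma_2\sigma_1\tau_2=\tau_1\sigma_2\sigma_1$, i.e. $B\,\rho(\tau_2)=\rho(\tau_1)\,B$ with $B:=\mu(\sigma_2)\mu(\sigma_1)$, is automatically satisfied once the above substitutions are made — this is plausible because $B$ and $A$ are related through the braid relation $\sigma_1\sigma_2\sigma_1=\sigma_2\sigma_1\sigma_2$, so the two mixed relations are not independent given that $\mu$ is already a braid representation. Finally, invertibility of $\rho(\tau_1),\rho(\tau_2)$ translates to $a_1 d_1\neq 0$, $a_2 d_2\neq 0$, which with the substitutions gives the stated nonvanishing conditions, and I should record that consistency of these equations forces $\lambda_1\neq\lambda_2$ need not be separately assumed since $\lambda_1$ already appears in denominators (so $\lambda_1\neq 0$ suffices, as stated).

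The main obstacle I anticipate is the bookkeeping in the mixed-relation step: one must carefully track which entries of $A\rho(\tau_1)A^{-1}$ must vanish to match the forced lower-triangular shape of $\rho(\tau_2)$, and show that the single scalar condition $d_1 = a_1 - b_1(\lambda_1-\lambda_2)/\lambda_1$ is both necessary and sufficient, rather than over- or under-determining the system. A secondary subtlety is verifying that the fourth relation is genuinely redundant; if it is not, it would impose an extra constraint and the theorem as stated would need that constraint added, so I would double-check this by direct computation using the explicit $2\times 2$ matrices rather than appealing only to the heuristic about the braid relation. Everything else — the commutation arguments and the invertibility translations — is routine linear algebra over $\mathbb{C}$.
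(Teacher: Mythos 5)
Your proposal is correct and is essentially the paper's own proof: the paper likewise writes $\rho(\tau_1)$ and $\rho(\tau_2)$ as general $2\times 2$ matrices, imposes the four relations $\tau_1\sigma_1=\sigma_1\tau_1$, $\tau_2\sigma_2=\sigma_2\tau_2$, $\sigma_1\sigma_2\tau_1=\tau_2\sigma_1\sigma_2$, $\sigma_2\sigma_1\tau_2=\tau_1\sigma_2\sigma_1$, and solves the resulting linear system. One small bookkeeping correction: the constraint $d_1=a_1-\frac{b_1(\lambda_1-\lambda_2)}{\lambda_1}$ already drops out of the $(1,2)$ entry of $\sigma_1\tau_1=\tau_1\sigma_1$ (together with $c_1=0$), rather than from the mixed relation; the mixed relation $A\rho(\tau_1)=\rho(\tau_2)A$ with $A=\mu(\sigma_1)\mu(\sigma_2)=\left(\begin{smallmatrix}0 & \lambda_1^2\\ -\lambda_2^2 & \lambda_1\lambda_2\end{smallmatrix}\right)$ then yields $a_2=d_1$, $d_2=a_1$, $c_2=-\frac{b_1\lambda_2}{\lambda_1}$, and the fourth relation is indeed automatically satisfied, as you anticipated.
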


\begin{proof}
As $\rho$ is an extension of $\mu$, it follows that $\rho(\sigma_1)=\mu(\sigma_1)$ and $\rho(\sigma_2)=\mu(\sigma_2)$. Now, set $\rho(\tau_1)=\left( \begin{matrix}
a_1 & b_1 \\
c_1 & d_1
\end{matrix}\right) \text{ and } \rho(\tau_2)=\left( \begin{matrix}
a_2 & b_2 \\
c_2 & d_2
\end{matrix} \right),$ where $a_1,b_1,c_1,d_1,a_2,b_2,c_2,d_2 \in \mathbb{C},$ with $a_1d_1\neq b_1c_1$ and $a_2d_2\neq b_2c_2.$ The relations that we have between the generators of $SB_3$ are: $\tau_1\sigma_1=\sigma_1\tau_1, \tau_2\sigma_2=\sigma_2\tau_2, \sigma_1\sigma_2\tau_1=\tau_2\sigma_1\sigma_2$ and $\sigma_2\sigma_1\tau_2=\tau_1\sigma_2\sigma_1$. Applying these relations to the images of $\sigma_1,\sigma_2,\tau_1$ and $\tau_2$ under $\rho$, and using a similar strategy as in the proof of Theorem \ref{ThSB2}, we get our required result.
\end{proof}

\begin{corollary}
The representations $\rho: SB_3 \longrightarrow \mathrm{GL}_2(\mathbb{C})$ determined in Theorem \ref{Theooo} are irreducible.
\end{corollary}

\begin{proof}
The proof is a consequence of Lemma \ref{lemmaaaa} and Theorem \ref{irred2}.
\end{proof}

Now, we show that the only irreducible representations $\rho: SB_3 \longrightarrow \mathrm{GL}_2(\mathbb{C})$ are those that are equivalent to representations determined in Theorem \ref{Theooo}.

\begin{theorem}
Consider a representation $\rho: SB_3 \longrightarrow \mathrm{GL}_2(\mathbb{C})$. If the restriction of $\rho$ to $B_3$ is reducible, then $\rho$ itself is reducible.
\end{theorem}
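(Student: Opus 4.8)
The plan is to build an explicit one-dimensional $\rho(SB_3)$-invariant subspace out of the reducibility hypothesis. Write $\rho(\sigma_1),\rho(\sigma_2),\rho(\tau_1),\rho(\tau_2)$ for the images of the four generators. Since the representation is two-dimensional, the hypothesis means exactly that $\rho(\sigma_1)$ and $\rho(\sigma_2)$ possess a common eigenvector; after conjugating $\rho$ by a suitable element of $GL_2(\mathbb{C})$ (which changes nothing about (ir)reducibility) I may assume this vector is $e_1=(1,0)^{T}$, so $\rho(\sigma_1)$ and $\rho(\sigma_2)$ are simultaneously upper triangular. It then suffices to prove $\rho(\tau_1)e_1\in\mathbb{C}e_1$ and $\rho(\tau_2)e_1\in\mathbb{C}e_1$, because then $\mathbb{C}e_1$ is invariant under all four generators and $\rho$ is reducible.

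The tool I would use repeatedly is the elementary fact that if $A\in GL_2(\mathbb{C})$ is not a scalar matrix and $B$ commutes with $A$, then every eigenvector of $A$ is an eigenvector of $B$: from $Av=\lambda v$ one gets $A(Bv)=\lambda(Bv)$, and the $\lambda$-eigenspace of a non-scalar $2\times 2$ matrix is one-dimensional. With this in hand I would split into cases according to whether $\rho(\sigma_1)$ and $\rho(\sigma_2)$ are scalar.

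\emph{Case 1: neither $\rho(\sigma_1)$ nor $\rho(\sigma_2)$ is scalar.} Relation (\ref{eqs5}) gives $\rho(\tau_1)\rho(\sigma_1)=\rho(\sigma_1)\rho(\tau_1)$ and $\rho(\tau_2)\rho(\sigma_2)=\rho(\sigma_2)\rho(\tau_2)$, so by the fact above $e_1$ (an eigenvector of both $\rho(\sigma_1)$ and $\rho(\sigma_2)$) is an eigenvector of both $\rho(\tau_1)$ and $\rho(\tau_2)$, and we are done. \emph{Case 2: one of them, say $\rho(\sigma_1)=\alpha I$, is scalar} (the remaining situation is handled identically via the index-swapping automorphism $\sigma_1\leftrightarrow\sigma_2$, $\tau_1\leftrightarrow\tau_2$ of $SB_3$). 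Substituting $\rho(\sigma_1)=\alpha I$ into relations (\ref{eqs6}) and (\ref{eqs7}) for $i=1$ lets the scalar $\alpha$ cancel; combining the two resulting identities with (\ref{eqs5}) and cancelling the invertible matrix $\rho(\sigma_2)$ yields $\rho(\tau_1)=\rho(\tau_2)$ together with the fact that this common matrix commutes with $\rho(\sigma_2)$. If $\rho(\sigma_2)$ is non-scalar the same elementary fact gives $\rho(\tau_1)e_1\in\mathbb{C}e_1$; if $\rho(\sigma_2)$ is also scalar, then $\rho(SB_3)$ is generated by the scalars $\rho(\sigma_1),\rho(\sigma_2)$ and the single matrix $\rho(\tau_1)$, hence is abelian, and any eigenvector of $\rho(\tau_1)$ spans a $\rho(SB_3)$-invariant line. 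In every case $\rho$ is reducible.

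The only computation of any substance is the cancellation step in Case 2, and I do not expect it to present a real obstacle. The one point demanding care is precisely that scalar case distinction: when $\rho(\sigma_i)$ is scalar the commutation relation (\ref{eqs5}) carries no information about $\rho(\tau_i)$, so one is forced to fall back on the mixed relations (\ref{eqs6})–(\ref{eqs7}); overlooking this is the natural way the argument could go wrong.
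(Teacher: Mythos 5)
Your proof is correct. It rests on the same two pillars as the paper's argument: the commutation relation (\ref{eqs5}) forces $\rho(\tau_i)$ to preserve each eigenvector of $\rho(\sigma_i)$ whenever $\rho(\sigma_i)$ is non-scalar (because the relevant eigenspace is then one-dimensional), and the mixed relations (\ref{eqs6})--(\ref{eqs7}) force $\rho(\tau_1)=\rho(\tau_2)$ in the scalar case. The difference is organizational: the paper argues by contradiction, assuming $\rho$ irreducible, deducing that some $\rho(\tau_i)(v)$ must be independent of $v$, hence that $\rho(\sigma_1)$ (and then $\rho(\sigma_2)$) is scalar, hence that $\rho(\tau_1)=\rho(\tau_2)$ and an invariant line exists after all. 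You instead run a direct case analysis on whether $\rho(\sigma_1)$ and $\rho(\sigma_2)$ are scalar and exhibit an invariant line in each case. Your version is arguably cleaner: it isolates explicitly the one point where the commutation relation carries no information (the scalar case), which the paper reaches only implicitly through its contradiction hypothesis, and it bypasses entirely the paper's eigenvalue step, where the displayed identity $\lambda_1^2\lambda_2=\lambda_2\lambda_1^2$ should read $\lambda_1^2\lambda_2=\lambda_2^2\lambda_1$ for the conclusion $\lambda_1=\lambda_2$ to follow. Your cancellation in Case 2 checks out (from (\ref{eqs6}) with $\rho(\sigma_1)=\alpha I$ one gets $\rho(\sigma_2)\rho(\tau_1)=\rho(\tau_2)\rho(\sigma_2)=\rho(\sigma_2)\rho(\tau_2)$ by (\ref{eqs5}), whence $\rho(\tau_1)=\rho(\tau_2)$), and the index-swapping map $\sigma_1\leftrightarrow\sigma_2$, $\tau_1\leftrightarrow\tau_2$ is indeed an automorphism of $SB_3$ since it interchanges relations (\ref{eqs6}) and (\ref{eqs7}). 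One remark: like the paper's own proof, you read the hypothesis ``the restriction to $B_2$ is reducible'' as saying that $\rho(\sigma_1)$ and $\rho(\sigma_2)$ share an eigenvector, i.e., reducibility of the restriction to $\langle\sigma_1,\sigma_2\rangle$; this is clearly the intended reading, since the restriction to the cyclic subgroup generated by $\sigma_1$ alone is always reducible over $\mathbb{C}$ in dimension $2$ and the statement would otherwise be false.
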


\begin{proof}
Suppose that the restriction of $\rho$ to $B_3$ is reducible, and suppose, to get a contradiction, that $\rho$ itself is irreducible. As the restriction of $\rho$ to $B_3$ is reducible, there exists a non-trivial subspace $V$ of $\mathbb{C}^2$ that is invariant under $\rho(\sigma_1)$ and $\rho(\sigma_2)$. We have $\dim(V)=1$ since $V$ is non-trivial, and so $V=<v>$ for $v \in \mathbb{C}^*$. We have
$$\rho(\sigma_1)(v)=\lambda_1v$$
and
$$\rho(\sigma_2)(v)=\lambda_2v.$$
But we know that
$$\sigma_1\sigma_2\sigma_1=\sigma_2\sigma_1\sigma_2,$$
which implies that
$$\rho(\sigma_1)\rho(\sigma_2)\rho(\sigma_1)=\rho(\sigma_2)\rho(\sigma_1)\rho(\sigma_2),$$
and so 
$$\rho(\sigma_1)\rho(\sigma_2)\rho(\sigma_1)(v)=\rho(\sigma_2)\rho(\sigma_1)\rho(\sigma_2)(v),$$
which gives that
$\lambda_1^2\lambda_2=\lambda_1\lambda_2^2$, and so $\lambda_1=\lambda_2$ as both matrices are invertible. We set in the following $\lambda= \lambda_1=\lambda_2$. On the other hand, we have 
$$\sigma_1\tau_1=\tau_1\sigma_1,$$
and then
$$\rho(\sigma_1)\rho(\tau_1)=\rho(\tau_1)\rho(\sigma_1),$$
which implies that
$$\rho(\sigma_1)\rho(\tau_1)(v)=\rho(\tau_1)\rho(\sigma_1)(v),$$
and so
$$\rho(\sigma_1)\rho(\tau_1)(v)=\rho(\tau_1)\lambda_1(v)=\lambda_1\rho(\tau_1)(v).$$
This gives that $\rho(\tau_1)(v)$ is an eigenvector of $\rho(\sigma_1)$ with eigenvalue $\lambda$. Similarly, $\rho(\tau_2)(v)$ is an eigenvector of $\rho(\sigma_2)$ with eigenvalue $\lambda$. As $\rho$ is irreducible and the vector $v$ is invariant under $\rho(\sigma_1)$ and $\rho(\sigma_2)$, we have that $\rho(\tau_1)(v)$ and $v$ are linearly independent or $\rho(\tau_2)(v)$ and $v$ are linearly independent. Without loss of generality, suppose that $\rho(\tau_1)(v)$ and $v$ are linearly independent and let $P$ be the $2\times 2$ invertible matrix whose columns are $\rho(\tau_1)(v)$ and $v$. We now consider an equivalent representation $\rho'$ to $\rho$ defined as 
$$\rho'(g)=P^{-1}\rho(g)P$$ 
for all $g \in SB_3$. Hence, we can see that $\rho'(\sigma_1)=\lambda I_2$. In addition, we can see that $\rho'(\sigma_2)=\lambda I_2$ using the relation $\sigma_1\sigma_2\sigma_1=\sigma_2\sigma_1\sigma_2$. But we have the relation
$$\sigma_2\sigma_1\tau_2=\tau_1\sigma_2\sigma_1,$$
which gives that
$$\rho'(\sigma_2)\rho'(\sigma_1)\rho'(\tau_2)=\rho'(\tau_1)\rho'(\sigma_2)\rho'(\sigma_1),$$
and so
$$\lambda I_2 \lambda I_2 \rho'(\tau_2)=\rho'(\tau_1)\lambda I_2 \lambda I_2,$$
and then 
$$\rho'(\tau_2)=\rho'(\tau_1).$$
So, we have that $\rho'(\tau_2)$ and $\rho'(\tau_1)$ have a common eigenvector, since they are equal, and $\rho'(\sigma_1)$ and $\rho'(\sigma_2)$ are both constant matrices, which gives that $\rho$ is reducible, a contradiction. Therefore, $\rho$ is irreducible, as required.
\end{proof}

\begin{corollary} \label{cor}
The only irreducible representations $\rho: SB_3 \longrightarrow \mathrm{GL}_2(\mathbb{C})$ are the equivalent representations to $\rho: SB_3 \longrightarrow \mathrm{GL}_2(\mathbb{C})$ such that
$$\rho(\sigma_1)=\mu(\sigma_1), \rho(\sigma_2)=\mu(\sigma_2), \rho(\tau_1) =\left( \begin{array}{@{}c@{}}
  \begin{matrix}
   		a_1 & b_1 \\
   		0 & d_1 \\
   		\end{matrix}
\end{array} \right) \text{ and } \rho(\tau_2) =\left( \begin{array}{@{}c@{}}
  \begin{matrix}
   		a_2 & 0 \\
   		c_2 & d_2 \\
   		\end{matrix}
\end{array} \right),$$
where $a_1, b_1, d_1, a_2, c_2, d_2, \lambda_1, \lambda_2 \in \mathbb{C}, a_1\neq 0, d_1 \neq 0, a_2\neq 0, d_2\neq 0, \lambda_1\neq 0, \lambda_2\neq 0, \lambda_1^2+\lambda_2^2-\lambda_1\lambda_2\neq 0, d_1=a_2=a_1-\frac{b_1(\lambda_1-\lambda_2)}{\lambda_1}, c_2=-\frac{b_1\lambda_2}{\lambda_1}, d_2=a_1$.
\end{corollary}

\subsection{Irreducible representations of $SB_3$ of degree $3$}
Now, we classify the forms of some irreducible representations $\rho: SB_3 \longrightarrow \mathrm{GL}_3(\mathbb{C})$. Again, we introduce a theorem which was proved by I. Tuba and H. Wenzl in \cite{I.Tuba}. They also classified all irreducible representations $\mu: B_3 \longrightarrow \mathrm{GL}_3(\mathbb{C})$.

\begin{theorem}\cite{I.Tuba} \label{irred3}
Let $\mu: B_3 \longrightarrow \mathrm{GL}_3(\mathbb{C})$ be an irreducible representation of $B_3$, then, $\mu$ acts on the generators of $B_3$, $\sigma_1$ and $\sigma_2$, as follows.
$$\mu(\sigma_1) =\left( \begin{array}{@{}c@{}}
  \begin{matrix}
   		\lambda_1 & \lambda_1\lambda_2^{-1}\lambda_3+\lambda_2 & \lambda_2 \\
   		0 & \lambda_2 & \lambda_2 \\
   		0 & 0 & \lambda_3
   		\end{matrix}
\end{array} \right) \text{ and \ } 
\mu(\sigma_2) =\left( \begin{array}{@{}c@{}}
  \begin{matrix}
		\lambda_3 & 0 & 0 \\
   		-\lambda_2 & \lambda_2 & 0 \\
   		\lambda_2 & -\lambda_1\lambda_2^{-1}\lambda_3-\lambda_2 & \lambda_1
   		\end{matrix}
\end{array} \right),$$
where $\lambda_1,\lambda_2,\lambda_3 \in \mathbb{C}^*$ with $(\lambda_1^2+\lambda_2\lambda_3)(\lambda_2^2+\lambda_1\lambda_3)(\lambda_3^2+\lambda_1\lambda_2)\neq 0$.
\end{theorem}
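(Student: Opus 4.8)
The plan is to follow the strategy of Tuba and Wenzl. Write $A=\mu(\sigma_1)$, $B=\mu(\sigma_2)$ and $C=\mu(\sigma_1\sigma_2\sigma_1)$. From the braid relation $\sigma_1\sigma_2\sigma_1=\sigma_2\sigma_1\sigma_2$ one checks $CAC^{-1}=B$ and $CBC^{-1}=A$, so $C^2$ commutes with both $A$ and $B$; since $\mu$ is irreducible, Schur's lemma gives $C^2=\gamma I_3$ for some $\gamma\in\mathbb{C}^{*}$. In particular $A$ and $B$ are conjugate, so they share the characteristic polynomial $(t-\lambda_1)(t-\lambda_2)(t-\lambda_3)$, which defines the $\lambda_i$.

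The first step is to show $A$ is non-derogatory (its minimal polynomial has degree $3$). Otherwise, since the dimension is $3$, some eigenvalue $\lambda$ of $A$ has a $2$-dimensional eigenspace $E$; as $B$ is conjugate to $A$ it has an eigenspace $E'$ of the same dimension, so $E\cap E'\neq 0$, and any nonzero vector there is a common eigenvector of $A$ and $B$, spanning a $\mu(B_3)$-invariant line and contradicting irreducibility. (In particular $A$ is not scalar.)

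The second, and main, step is to reach the triangular normal form. Choose a complete $A$-invariant flag $\mathcal{F}:0\subset V_1\subset V_2\subset\mathbb{C}^3$ on which $A$ has eigenvalues $\lambda_1,\lambda_2,\lambda_3$ from the bottom (possible for a non-derogatory $3\times 3$ matrix, for any chosen ordering of the eigenvalue multiset); then $\mathcal{G}:=C\mathcal{F}$ is a complete $B$-invariant flag with the same ordered eigenvalues, and since $C^2$ is scalar, $C$ interchanges $\mathcal{F}$ and $\mathcal{G}$. Hence the relative position of $\mathcal{F}$ and $\mathcal{G}$ equals that of $\mathcal{G}$ and $\mathcal{F}$; a direct analysis of the possible relative positions of two flags in $\mathbb{C}^3$ shows this forces either that $\mathcal{F}$ and $\mathcal{G}$ are opposite, or that they share a proper subspace — which would then be invariant under both $A$ and $B$, hence excluded by irreducibility. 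In a basis adapted to the opposite flags $\mathcal{F}$ and $\mathcal{G}$, the matrix $A$ becomes upper triangular with diagonal $(\lambda_1,\lambda_2,\lambda_3)$ and $B$ becomes lower triangular with diagonal $(\lambda_3,\lambda_2,\lambda_1)$, as in the statement; conjugating further by diagonal matrices fixes the scalings of the off-diagonal entries.

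Finally, substituting these triangular $A$ and $B$ into $ABA=BAB$ yields an explicit polynomial system in the off-diagonal entries and the $\lambda_i$; using $\lambda_i\neq 0$ one solves it, and the residual diagonal-conjugation freedom normalises the solution to the unique pair of matrices displayed in the theorem. Tracking when the constructed module is irreducible — equivalently, when the relevant off-diagonal entries do not degenerate — produces exactly the condition $(\lambda_1^2+\lambda_2\lambda_3)(\lambda_2^2+\lambda_1\lambda_3)(\lambda_3^2+\lambda_1\lambda_2)\neq 0$: if a factor vanishes, the same computation exhibits a proper invariant subspace, so the inequality is forced; conversely, when it holds, the displayed matrices do satisfy $ABA=BAB$ and have no common invariant subspace. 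The hard part is the second step — arguing rigorously that a non-opposite pair of flags forces reducibility, including the degenerate case in which $A$ has a unique invariant flag — together with the mechanical but lengthy solution of the braid-relation system, which is where the precise off-diagonal entries and the non-degeneracy condition actually emerge.
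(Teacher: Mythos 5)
The paper gives no proof of this statement; it is imported verbatim from Tuba--Wenzl \cite{I.Tuba}, so the only meaningful comparison is with that source. Your outline is a correct reconstruction of their argument --- $C=\mu(\sigma_1\sigma_2\sigma_1)$ swaps $A$ and $B$ with $C^2$ scalar by Schur, the non-derogatory step via intersecting eigenspaces, the two-flag relative-position analysis forcing opposite flags (hence ordered upper/lower triangular forms with reversed diagonals), and then the explicit solution of $ABA=BAB$ with the non-degeneracy condition --- and the parts you leave unexecuted (the Bruhat-position case check and the polynomial system) are exactly the mechanical core of the original proof, correctly identified as such.
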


Now, we extend all irreducible representations $\mu: B_3 \longrightarrow \mathrm{GL}_3(\mathbb{C})$ to $SB_3$.

\begin{theorem} \label{Theooo1}
Let $\rho: SB_3 \longrightarrow \mathrm{GL}_3(\mathbb{C})$ be a representation of $SB_3$ that extends the representation $\mu$ given in Theorem \ref{irred3}, then, $\rho$ acts on the generators of $SB_3$, $\sigma_1, \sigma_2, \tau_1$ and $\tau_2$, as follows.
$$\rho(\sigma_1)=\mu(\sigma_1), \rho(\sigma_2)=\mu(\sigma_2), \rho(\tau_1) =\left( \begin{array}{@{}c@{}}
  \begin{matrix}
   		a_1 & b_1 & c_1 \\
   		0 & e_1 & f_1\\
   		0 & 0 & i_1
   		\end{matrix}
\end{array} \right) \text{ and } \rho(\tau_2) =\left( \begin{array}{@{}c@{}}
  \begin{matrix}
   		a_2 & 0 & 0 \\
   		d_2 & e_2 & 0\\
   		g_2 & h_2 & i_2
   		\end{matrix}
\end{array} \right),$$
where $a_1,b_1,c_1,e_1,f_1, i_1, a_2, d_2, e_2, g_2, h_2, i_2, \lambda_1, \lambda_2, \lambda_3 \in \mathbb{C}, a_1\neq 0,e_1\neq 0, i_1\neq 0, a_2\neq 0, e_2\neq 0, i_2\neq 0, \lambda_1 \neq 0, \lambda_2 \neq 0, \lambda_3\neq 0$ with $(\lambda_1^2+\lambda_2\lambda_3)(\lambda_2^2+\lambda_1\lambda_3)(\lambda_3^2+\lambda_1\lambda_2)\neq 0, \lambda_2\neq -\lambda_3, a_1=i_2=\frac{c_1\lambda_2(\lambda_1-\lambda_2)(\lambda_1-\lambda_3)+e_1\lambda_1\lambda_2(\lambda_2+\lambda_3)+f_1\lambda_3(\lambda_1-\lambda_2)(\lambda_1+\lambda_2)}{\lambda_1\lambda_2(\lambda_2+\lambda_3)},\\ b_1=-h_2=\frac{(\lambda_1\lambda_3+\lambda_2^2)(c_1\lambda_2(\lambda_1-\lambda_3)+f_1\lambda_3(\lambda_1+\lambda_2))}{\lambda_1\lambda_2^2(\lambda_2+\lambda_3)},$ $ i_1=a_2=e_1+f_1(\frac{\lambda_3}{\lambda_2}-1), d_2=-f_1, e_2=e_1, g_2=c_1.$
\end{theorem}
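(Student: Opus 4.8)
The plan is to mimic the proof of Theorem~\ref{ThSB2}: write the images of the singular generators with undetermined entries, turn the defining relations of $SB_3$ into a polynomial system, and solve it. Since $\rho$ extends $\mu$, we already have $\rho(\sigma_1)=\mu(\sigma_1)=:M_1$ and $\rho(\sigma_2)=\mu(\sigma_2)=:M_2$, and the braid relation $M_1M_2M_1=M_2M_1M_2$ holds automatically because $\mu$ is a representation of $B_3$. Setting $T_1:=\rho(\tau_1)$ and $T_2:=\rho(\tau_2)$, the only conditions still to be imposed are the four relations of $SB_3$ involving the $\tau_i$, which for $n=3$ read
\[
T_1M_1=M_1T_1,\qquad T_2M_2=M_2T_2,\qquad M_1M_2T_1=T_2M_1M_2,\qquad M_2M_1T_2=T_1M_2M_1 .
\]

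First I would use $T_1M_1=M_1T_1$, i.e.\ the fact that $T_1$ centralizes $M_1$. The matrix $M_1$ is upper triangular with diagonal $(\lambda_1,\lambda_2,\lambda_3)$, and a short case analysis using the standing hypotheses on the $\lambda_j$ (in particular $\lambda_2\neq-\lambda_3$) shows that $M_1$ is non-derogatory; hence its centralizer is $\mathbb{C}[M_1]=\{\alpha I+\beta M_1+\gamma M_1^2:\alpha,\beta,\gamma\in\mathbb{C}\}$, so $T_1$ is upper triangular. Naming its entries $a_1,b_1,c_1,e_1,f_1,i_1$ as in the statement, one checks that the linear map $(\alpha,\beta,\gamma)\mapsto(c_1,f_1,e_1)$ has determinant $-\lambda_1\lambda_2(\lambda_2+\lambda_3)\neq0$; inverting it and substituting back into the remaining entries yields $i_1=e_1+f_1(\lambda_3\lambda_2^{-1}-1)$ together with the claimed rational expressions for $a_1$ and $b_1$ in terms of $c_1,e_1,f_1$ and the $\lambda_j$. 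Next, $M_1M_2T_1=T_2M_1M_2$ gives $T_2=(M_1M_2)\,T_1\,(M_1M_2)^{-1}$, so $T_2$ is completely determined by $T_1$; carrying out this conjugation and reading off the entries produces a lower triangular $T_2$ with $a_2=i_1$, $d_2=-f_1$, $e_2=e_1$, $g_2=c_1$, $i_2=a_1$ and $h_2=-b_1$, which is exactly the asserted form.

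It then remains to see that the two relations $T_2M_2=M_2T_2$ and $M_2M_1T_2=T_1M_2M_1$ impose nothing new. Using only the braid relation in $B_3$ one verifies the group identities $\sigma_2^{-1}\sigma_1^{-1}\sigma_2\sigma_1\sigma_2=\sigma_1$ and $\sigma_1(\sigma_2\sigma_1^2\sigma_2)=(\sigma_2\sigma_1^2\sigma_2)\sigma_1$. Applying $\rho$ and using $T_2=(M_1M_2)T_1(M_1M_2)^{-1}$, the first identity turns $T_2M_2=M_2T_2$ into $T_1M_1=M_1T_1$, while the second turns $M_2M_1T_2=T_1M_2M_1$ into the statement that $T_1$ commutes with $\rho(\sigma_2\sigma_1^2\sigma_2)$, which holds because $T_1\in\mathbb{C}[M_1]$ and $\rho(\sigma_2\sigma_1^2\sigma_2)$ commutes with $M_1$. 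Conversely, the matrices displayed in the statement visibly satisfy all four relations, so they do define representations. I expect the main difficulty to be the bookkeeping in the conjugation step — producing the closed forms of the entries of $T_2$ and the formulas for $a_1,b_1,i_1$ is a sizeable symbolic computation — together with keeping track of exactly where the hypothesis $\lambda_2\neq-\lambda_3$ enters (it is what makes $M_1$ non-derogatory and the change of variables invertible, and it is precisely the factor $\lambda_2+\lambda_3$ appearing in the denominators of the final formulas), so that no solutions are lost.
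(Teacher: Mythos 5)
Your proposal is correct, and it is organized rather differently from the paper's argument. The paper disposes of this theorem by declaring it similar to the proof of Theorem~\ref{Theooo}, i.e.\ by writing $\rho(\tau_1)$ and $\rho(\tau_2)$ as generic $3\times 3$ matrices, expanding the four relations into a polynomial system in the unknown entries, and solving it by the same brute-force case analysis as in Theorem~\ref{ThSB2}. You instead solve the system structurally: the relation $T_1M_1=M_1T_1$ is settled once and for all by observing that $M_1$ is non-derogatory (this is exactly where $\lambda_2\neq-\lambda_3$ enters, via the $(1,2)$ entry $\lambda_1\lambda_2^{-1}\lambda_3+\lambda_2$ in the case $\lambda_1=\lambda_2$), so that $T_1=\alpha I+\beta M_1+\gamma M_1^2$; the relation $M_1M_2T_1=T_2M_1M_2$ then forces $T_2=(M_1M_2)T_1(M_1M_2)^{-1}=\alpha I+\beta M_2+\gamma M_2^2$, since $(M_1M_2)M_1(M_1M_2)^{-1}=M_2$ by the braid relation; and the two remaining relations are redundant by your identities $\sigma_2^{-1}\sigma_1^{-1}\sigma_2\sigma_1\sigma_2=\sigma_1$ and $\sigma_1(\sigma_2\sigma_1^2\sigma_2)=(\sigma_2\sigma_1^2\sigma_2)\sigma_1$ in $B_3$, both of which I verified. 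I also checked that your change of variables $(\alpha,\beta,\gamma)\mapsto(c_1,f_1,e_1)$ has determinant $-\lambda_1\lambda_2(\lambda_2+\lambda_3)$ and that inverting it reproduces exactly the stated formulas for $a_1$, $b_1$ and $i_1$ (with the statement's $l_3$ read as $\lambda_3$); moreover, since $M_2=QM_1Q^{-1}$ with $Q$ the signed reversal matrix $Q_{ij}=(-1)^{i+1}\delta_{i,4-j}$, one gets $T_2=QT_1Q^{-1}$ and hence $a_2=i_1$, $e_2=e_1$, $i_2=a_1$, $d_2=-f_1$, $h_2=-b_1$, $g_2=c_1$ with essentially no further computation. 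Your route buys a conceptual explanation of the hypothesis $\lambda_2\neq-\lambda_3$, of the symmetry between $\rho(\tau_1)$ and $\rho(\tau_2)$, and a guarantee that no solutions are lost; the paper's route is more elementary but leaves the provenance of the extra hypothesis and the redundancy of two of the four relations implicit.
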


\begin{proof}
The proof is similar to the proof of Theorem \ref{Theooo}.
\end{proof}

\begin{corollary}
The representations $\rho: SB_3 \longrightarrow \mathrm{GL}_3(\mathbb{C})$ determined in Theorem \ref{Theooo1} are irreducible.
\end{corollary}

\begin{proof}
The proof is a consequence of Lemma \ref{lemmaaaa} and Theorem \ref{irred3}.
\end{proof}

We end this section with the following question.

\begin{question} \label{quee}
Let $\rho: SB_3 \longrightarrow \mathrm{GL}_3(\mathbb{C})$ be an irreducible representation. What are the possible forms of $\rho$?
\end{question}

\vspace*{0.15cm}

\section{On the Irreducibility of Complex Homogeneous Local Representations of $SB_n$} %%%%%% Section 5 %%%%%%%%

In \cite{Mikha}, Y. Mikhalchishina classified all homogeneous local representations of $B_n$ for all $n\geq 3$. Moreover, in \cite{M.Ch}, M. Chreif and M. Dally studied the irreducibility of these representations in some cases. In \cite{Ma}, T. Mayassi and M. Nasser classified all homogeneous local representations of $SB_n$ for all $n\geq 2$. In this section, we study the irreducibility of these homogeneous local representations of $SB_n$ in most cases.

\vspace*{0.1cm}

First, we introduce, in the following four theorems, previous results done in \cite{Mikha}, \cite{M.Ch}, and \cite{Ma}.

\begin{theorem} \cite{Mikha} \label{ThMi}
Consider $n\geq 3$ and let $\mu: B_n \longrightarrow \mathrm{GL}_n(\mathbb{C})$ be a non-trivial homogeneous local representation of $B_n$. Then, $\mu$ is equivalent to one of the following three representations.
\begin{itemize}
\item[(1)] $\mu_1: B_n \longrightarrow \mathrm{GL}_n(\mathbb{C}) \hspace*{0.15cm} \text{such that } 
\mu_1(\sigma_i) =\left( \begin{array}{c|@{}c|c@{}}
   \begin{matrix}
     I_{i-1} 
   \end{matrix} 
      & 0 & 0 \\
      \hline
    0 &\hspace{0.2cm} \begin{matrix}
   		a & \frac{1-a}{c}\\
   		c & 0
   		\end{matrix}  & 0  \\
\hline
0 & 0 & I_{n-i-1}
\end{array} \right), \\ \text{ where } c \neq 0, a\neq 1, \hspace*{0.15cm} \text{for} \hspace*{0.2cm} 1\leq i\leq n-1.$
\item[(2)] $\mu_2: B_n \longrightarrow \mathrm{GL}_n(\mathbb{C}) \hspace*{0.15cm} \text{such that }
\mu_2(\sigma_i) =\left( \begin{array}{c|@{}c|c@{}}
   \begin{matrix}
     I_{i-1} 
   \end{matrix} 
      & 0 & 0 \\
      \hline
    0 &\hspace{0.2cm} \begin{matrix}
   		0 & \frac{1-d}{c}\\
   		c & d
   		\end{matrix}  & 0  \\
\hline
0 & 0 & I_{n-i-1}
\end{array} \right),\\ \text{ where } c\neq 0, d\neq 1 \hspace*{0.2cm} \text{for} \hspace*{0.2cm} 1\leq i\leq n-1.$
\item[(3)] $\mu_3: B_n \longrightarrow \mathrm{GL}_n(\mathbb{C}) \hspace*{0.15cm} \text{such that } 
\mu_3(\sigma_i) =\left( \begin{array}{c|@{}c|c@{}}
   \begin{matrix}
     I_{i-1} 
   \end{matrix} 
      & 0 & 0 \\
      \hline
    0 &\hspace{0.2cm} \begin{matrix}
   		0 & b\\
   		c & 0
   		\end{matrix}  & 0  \\
\hline
0 & 0 & I_{n-i-1}
\end{array} \right),\\ \text{ where } bc\neq 0 \hspace*{0.2cm} \text{for} \hspace*{0.2cm} 1\leq i\leq n-1.$
\end{itemize}
\end{theorem}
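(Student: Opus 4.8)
The plan is to exploit that a homogeneous local representation $\mu:B_n\to GL_n(\mathbb{C})$ is completely encoded by a single invertible $2\times 2$ matrix $M=\begin{pmatrix} a & b\\ c & d\end{pmatrix}$, namely the common block appearing in the definition of a homogeneous local representation (here $k=m-n+2=2$). Thus the whole problem reduces to deciding for which $M\in GL_2(\mathbb{C})$ the matrices $\mu(\sigma_i)$ satisfy the defining relations of $B_n$, and then sorting the admissible $M$ into the three families.

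First I would dispose of the relations that cost nothing. For $|i-j|\geq 2$ the nontrivial $2\times 2$ blocks of $\mu(\sigma_i)$ and $\mu(\sigma_j)$ occupy the disjoint index sets $\{i,i+1\}$ and $\{j,j+1\}$, so these matrices commute automatically and relation (\ref{eqs2}) is vacuous. For the braid relation (\ref{eqs1}), the matrices $\mu(\sigma_i)$ and $\mu(\sigma_{i+1})$ are the identity away from the coordinates $\{i,i+1,i+2\}$, and by homogeneity they act there exactly as $\mu(\sigma_1)$ and $\mu(\sigma_2)$ act on $\{1,2,3\}$; hence every relation (\ref{eqs1}) holds as soon as the single $3\times 3$ identity $\mu(\sigma_1)\mu(\sigma_2)\mu(\sigma_1)=\mu(\sigma_2)\mu(\sigma_1)\mu(\sigma_2)$ does.

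Next I would expand that $3\times 3$ identity. Placing $M$ in the upper-left $2\times 2$ corner (with a $1$ in the last diagonal slot) for $\mu(\sigma_1)$ and in the lower-right corner for $\mu(\sigma_2)$, multiplying out both triple products, and comparing entries, one is left with the compact system
\[
a(a+bc-1)=0,\quad d(d+bc-1)=0,\quad abd=0,\quad acd=0,\quad ad(a-d)=0,
\]
together with the invertibility constraint $ad-bc\neq 0$. The structural observation is that $a$ and $d$ cannot both be nonzero: then $abd=acd=0$ would force $b=c=0$, whence $a(a-1)=d(d-1)=0$ gives $a=d=1$, i.e. $M=I_2$ and $\mu$ trivial, which is excluded. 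So for non-trivial $\mu$ exactly one of $\{a\neq 0,\,d=0\}$, $\{a=0,\,d\neq 0\}$, $\{a=d=0\}$ occurs. In the first case $a+bc-1=0$ together with $\det M=a-1\neq 0$ forces $c\neq 0$, $a\neq 1$ and $b=(1-a)/c$, which is the form $\mu_1$; the second case is its mirror image and gives $\mu_2$; the third leaves $M=\begin{pmatrix} 0 & b\\ c & 0\end{pmatrix}$ with $bc\neq 0$, which is $\mu_3$. Threading the determinant condition through each branch is exactly what produces the stated side conditions $c\neq 0,\,a\neq 1$ (resp. $c\neq 0,\,d\neq 1$, resp. $bc\neq 0$).

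I expect the only real labor to be the entrywise expansion of the two triple products and the bookkeeping of the resulting polynomial system --- elementary but error-prone --- and the care needed to carry the invertibility hypothesis into each case so that no singular matrix slips into a family and no genuine solution is dropped (for instance, the branch $a=d=0$ must be treated in its own right, even though the families $\mu_1$ and $\mu_3$ overlap when $a=0$). Notably, no change of basis is needed: each non-trivial homogeneous local representation is literally of one of the three displayed forms, so here ``equivalent to'' may in fact be strengthened to ``equal to''.
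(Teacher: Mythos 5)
This theorem is quoted from \cite{Mikha} and the paper offers no proof of it, so there is nothing internal to compare against; judged on its own, your argument is correct and complete in outline. The reduction is sound: for $|i-j|\geq 2$ the nontrivial blocks sit in disjoint coordinate pairs and commute, and by homogeneity all braid relations collapse to the single $3\times 3$ identity for $\mu(\sigma_1)\mu(\sigma_2)\mu(\sigma_1)=\mu(\sigma_2)\mu(\sigma_1)\mu(\sigma_2)$. I checked the entrywise expansion: with $M=\left(\begin{smallmatrix} a & b\\ c & d\end{smallmatrix}\right)$ the two triple products agree exactly when $a(a+bc-1)=0$, $d(d+bc-1)=0$, $abd=0$, $acd=0$, $ad(a-d)=0$, which is the system you state, and your case split ($a,d$ both nonzero forces $M=I_2$; $a\neq 0,d=0$ gives $\mu_1$ with $b=(1-a)/c$ and $\det M=a-1\neq 0$ forcing $a\neq 1$, $c\neq 0$; the mirror case gives $\mu_2$; $a=d=0$ gives $\mu_3$) is airtight, including the observation that the families overlap and that ``equivalent to'' can be read as ``equal to'' here. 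Two minor points worth making explicit if this were written out: the same system of equations is also sufficient, so each displayed family genuinely defines a representation; and the invertibility constraint $ad-bc\neq 0$ in the $\mu_1$ branch is what guarantees $b\neq 0$ and $c\neq 0$ simultaneously, so no division by zero occurs in writing $b=(1-a)/c$.
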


\begin{theorem} \cite{M.Ch} \label{ThCh1}
The homogeneous local representations of the form $\mu_1$ and $\mu_2$ defined in Theorem \ref{ThMi} are reducible.
\end{theorem}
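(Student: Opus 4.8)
The plan is to prove reducibility by exhibiting, for each of $\mu_1$ and $\mu_2$, a single non-zero vector $v\in\mathbb{C}^n$ fixed by every generator matrix $\mu_k(\sigma_i)$, $1\le i\le n-1$. Then $\langle v\rangle$ is a one-dimensional invariant subspace, and since $n\ge 3$ this subspace is proper and non-trivial, so the representation is reducible. Note that the braid relations play no role here: it suffices to check invariance under each generator separately.

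The key observation is that the repeated $2\times 2$ block of $\mu_1$, namely $M_1=\left(\begin{smallmatrix} a & \frac{1-a}{c}\\ c & 0\end{smallmatrix}\right)$, and that of $\mu_2$, namely $M_2=\left(\begin{smallmatrix} 0 & \frac{1-d}{c}\\ c & d\end{smallmatrix}\right)$, each have $1$ as an eigenvalue. Indeed, for a $2\times 2$ matrix $A$ one has $\det(A-I)=\det A-\operatorname{tr}A+1$; here $\operatorname{tr}M_1=a$, $\det M_1=a-1$, while $\operatorname{tr}M_2=d$, $\det M_2=d-1$, so $\det(M_k-I)=0$ in both cases. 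A direct computation then shows $M_1(1,c)^T=(1,c)^T$ and $M_2(1-d,c)^T=(1-d,c)^T$. The reason the eigenvalue $1$ is the relevant one — rather than the other eigenvalue $a-1$, resp. $d-1$ — is that $\mu_k(\sigma_i)$ acts as the identity on every coordinate except $i$ and $i+1$, so a global fixed vector is forced to have block-eigenvalue equal to $1$.

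Building on this, take $v=(1,c,c^2,\dots,c^{n-1})^T$ for $\mu_1$ and $v=(1,r,r^2,\dots,r^{n-1})^T$ for $\mu_2$, where $r=\frac{c}{1-d}$; this is well defined because $d\ne 1$ and non-zero because $c\ne 0$. For each $i$, the pair of coordinates $(v_i,v_{i+1})$ is a scalar multiple of the block's fixed vector: it equals $c^{i-1}(1,c)$ in the first case, and $r^{i-1}(1,r)$ with $(1,r)$ proportional to $(1-d,c)$ in the second. Hence the block fixes $(v_i,v_{i+1})^T$ while leaving the remaining entries of $v$ unchanged, so $\mu_k(\sigma_i)v=v$ for all $i$. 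Thus $\langle v\rangle$ is invariant and both $\mu_1$ and $\mu_2$ are reducible.

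There is no real obstacle; the entire content lies in noticing that $1$ is always an eigenvalue of the block and that the associated eigenvector has the form $(1,\ast)$, which forces the global fixed vector to be a geometric progression. The only points requiring care are that the hypotheses $c\ne 0$ and $d\ne 1$ of Theorem \ref{ThMi} are exactly what makes all the expressions well defined. (Equivalently, one could produce an invariant hyperplane from a common left eigenvector, but the one-dimensional invariant line above is the most economical route.)
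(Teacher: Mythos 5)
Your proof is correct: the computations check out ($\det(M_k-I)=0$ in both cases, and the geometric-progression vector is indeed fixed by every generator), and the hypotheses $c\neq 0$, $d\neq 1$ are used exactly where needed. This is essentially the same argument the paper uses for the analogous result on $SB_n$ (Theorem \ref{Thhhhh}): there one first conjugates by $P=\mathrm{diag}(c^{1-n},\dots,c,1)$ so that $(1,1,\dots,1)^T$ becomes the common fixed vector, which is precisely your vector $(1,c,\dots,c^{n-1})^T$ up to the change of basis and a scalar; you simply skip the conjugation and exhibit the invariant line directly.
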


\begin{theorem} \cite{M.Ch} \label{ThCh2}
The homogeneous local representations of the form $\mu_3$ defined in Theorem \ref{ThMi} are irreducible if and only if $bc\neq 1$, for $n\geq 3$.
\end{theorem}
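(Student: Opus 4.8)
The plan is to argue directly with the standard basis $e_1,\dots,e_n$ of $\mathbb{C}^n$, on which $\mu_3$ acts by $\mu_3(\sigma_i)e_i=c\,e_{i+1}$, $\mu_3(\sigma_i)e_{i+1}=b\,e_i$, and $\mu_3(\sigma_i)e_k=e_k$ for $k\neq i,i+1$, and to treat the two implications separately. For the ``only if'' direction, suppose $bc=1$; then I claim the line spanned by $v_0=(b^{\,n-1},b^{\,n-2},\dots,b,1)^{T}$ is invariant. Indeed $\mu_3(\sigma_i)v_0-v_0$ is supported on the coordinates $i$ and $i+1$, with entries $b(v_0)_{i+1}-(v_0)_i$ and $c(v_0)_i-(v_0)_{i+1}$, both of which vanish precisely because $bc=1$; since $1<n$ this is a proper nonzero invariant subspace, so $\mu_3$ is reducible. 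Contrapositively, irreducibility forces $bc\neq1$. (The vector $v_0$ is not pulled out of thin air: it is what one gets by solving $\mu_3(\sigma_i)v=v$ for all $i$.)

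For the ``if'' direction, assume $bc\neq1$, let $W\subseteq\mathbb{C}^n$ be a nonzero invariant subspace, and show $W=\mathbb{C}^n$. The first step is the observation that a single basis vector in $W$ already suffices: if $e_j\in W$ then $\mu_3(\sigma_j)e_j=c\,e_{j+1}\in W$ and $\mu_3(\sigma_{j-1})e_j=b\,e_{j-1}\in W$ whenever the indices are legal, so since $b,c\neq0$ an induction propagates to give every $e_k\in W$. Hence it is enough to produce one basis vector inside $W$. The second step extracts a short vector: take $0\neq w\in W$; for each $i$ the vector $\mu_3(\sigma_i)w-w$ is supported on $\{i,i+1\}$ with entries $bw_{i+1}-w_i$ and $cw_i-w_{i+1}$. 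If these vanished for every $i$, then $w_i=bw_{i+1}$ and $w_{i+1}=cw_i$ would give $(1-bc)w_i=0$, forcing $w=0$, a contradiction; so for some $i$ the vector $v:=\mu_3(\sigma_i)w-w=\alpha e_i+\beta e_{i+1}$ is a nonzero element of $W$.

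The third step produces a basis vector from $v$. If $\alpha=0$ or $\beta=0$ we are already done by the first step. Otherwise $\alpha,\beta\neq0$; assume $i\le n-2$ (the case $i=n-1$ being symmetric, using $\sigma_{i-1}$, which is legitimate because then $i\ge2$ since $n\ge3$). Then $\mu_3(\sigma_{i+1})v-v=\beta(c\,e_{i+2}-e_{i+1})$, so $v'':=c\,e_{i+2}-e_{i+1}\in W$, and applying $\mu_3(\sigma_{i+1})$ once more gives $v''+\mu_3(\sigma_{i+1})v''=(bc-1)e_{i+1}$. Since $bc\neq1$ this puts $e_{i+1}\in W$, and the first step finishes the argument.

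I do not expect a genuine obstacle here: the argument is essentially a two-line computation once set up correctly. The single load-bearing point is the second step, namely that $bc\neq1$ is exactly the condition preventing a nonzero $w$ from being killed by all the commutator-type differences $\mu_3(\sigma_i)(\cdot)-(\cdot)$; everything else is index bookkeeping, and the only place the hypothesis $n\ge3$ is actually needed is to guarantee that for each relevant $i$ at least one of $\sigma_{i-1},\sigma_{i+1}$ exists with its second affected index still in range.
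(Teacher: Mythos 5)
Your argument is correct: the invariant vector $(b^{n-1},\dots,b,1)^T$ for $bc=1$, the extraction of a vector supported on $\{i,i+1\}$ from $\mu_3(\sigma_i)w-w$, and the $(bc-1)e_{i+1}$ computation all check out, and the index bookkeeping for $i=n-1$ is handled properly. Note that the paper does not prove this statement itself --- it is quoted from Chreif--Dally \cite{M.Ch} --- but your method is essentially the same strategy the paper uses for its own analogous result on $\rho_3$ (Theorem \ref{Thlast}): produce a standard basis vector inside a putative invariant subspace via differences $\rho(\sigma_i)v-v$ and then propagate it to all of $\mathbb{C}^n$; the only cosmetic difference is that the paper first conjugates by a diagonal matrix to normalize the off-diagonal entries, whereas you work with $b$ and $c$ directly.
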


\begin{theorem} \cite{Ma} \label{TaMa}
Consider $n\geq 3$ and let $\rho: SB_n \longrightarrow \mathrm{GL}_n(\mathbb{C})$ be a non-trivial homogeneous local representation of $SB_n$. Then, $\rho$ is equivalent to one of the following three representations. 
\begin{itemize}
\item[(1)] $\rho_1: SB_n \longrightarrow \mathrm{GL}_n(\mathbb{C}) \hspace*{0.15cm} \text{such that}\\ \hspace*{0.15cm} \rho_1(\sigma_i) =\mu_1(\sigma_i) \hspace*{0.15cm} \text{and} \hspace*{0.15cm} \rho_1(\tau_i) =\left( \begin{array}{c|@{}c|c@{}}
   \begin{matrix}
     I_{i-1} 
   \end{matrix} 
      & 0 & 0 \\
      \hline
    0 &\hspace{0.2cm} \begin{matrix}
   		 1-(1-a)(1-t) & \frac{1-a}{c}(1-t)\\
   		c(1-t) & t
   		\end{matrix}  & 0  \\
\hline
0 & 0 & I_{n-i-1}
\end{array} \right), \\
\text{where} \hspace*{0.15cm} a, c, t \in \mathbb{C},a\neq 0, c\neq 0, \hspace*{0.15cm} \text{for} \hspace*{0.15cm} 1\leq i\leq n-1.$
\item[(2)] $\rho_2:SB_n \longrightarrow \mathrm{GL}_n(\mathbb{C}) \hspace*{0.15cm} \text{such that}\\ \hspace*{0.15cm} \rho_2(\sigma_i)=\mu_2(\sigma_i) \hspace*{0.15cm} \text{and} \hspace*{0.15cm} \rho_2(\tau_i) =\left( \begin{array}{c|@{}c|c@{}}
   \begin{matrix}
     I_{i-1} 
   \end{matrix} 
      & 0 & 0 \\
      \hline
    0 &\hspace{0.2cm} \begin{matrix}
   		x & \frac{1-d}{c}(1-x)\\
   		c(1-x) & 1-(1-d)(1-x)
   		\end{matrix}  & 0  \\   		
\hline
0 & 0 & I_{n-i-1}
\end{array} \right), \\
\text{where} \hspace*{0.15cm} c, d, x \in \mathbb{C}, c\neq 0, d\neq 0, \hspace*{0.15cm} \text{for} \hspace*{0.15cm} 1\leq i\leq n-1.$ 
\item[(3)] $\rho_3:SB_n \longrightarrow \mathrm{GL}_n(\mathbb{C}) \hspace*{0.15cm} \text{such that}\\ \hspace*{0.15cm} \rho_3(\sigma_i)=\mu_3(\sigma_i) \hspace*{0.15cm}  \text{and} \hspace*{0.15cm} \rho_3(\tau_i) =\left( \begin{array}{c|@{}c|c@{}}
   \begin{matrix}
     I_{i-1} 
   \end{matrix} 
      & 0 & 0 \\
      \hline
    0 &\hspace{0.2cm} \begin{matrix}
   		x & y\\
   		\frac{cy}{b} & x
   		\end{matrix}  & 0  \\
\hline
0 & 0 & I_{n-i-1}
\end{array} \right), \\
\text{where} \hspace*{0.15cm} b, c, x, y \in \mathbb{C}, b\neq 0, c\neq 0, \hspace*{0.15cm} \text{for} \hspace*{0.15cm} 1\leq i\leq n-1.$ 
\end{itemize}
Here, $\mu_1, \mu_2,$ and $\mu_3$ are the representations given in Theorem \ref{ThMi}.
\end{theorem}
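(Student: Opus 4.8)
The plan is to determine, up to equivalence, all pairs $(M,N)$ of $2\times2$ matrices that can arise as the common blocks of a homogeneous local representation $\rho\colon SB_n\to GL_n(\mathbb{C})$: here $k=m-n+2=2$, so $\rho(\sigma_i)$ is built from a single block $M\in GL_2(\mathbb{C})$ and $\rho(\tau_i)$ from a single block $N\in GL_2(\mathbb{C})$, and the task is to say for which $(M,N)$ the assignment respects the defining relations (\ref{eqs1})--(\ref{eqs7}) of $SB_n$. First I would note that relations (\ref{eqs2}), (\ref{eqs3}), (\ref{eqs4}) hold automatically for every $M$ and $N$, since the $2\times2$ blocks attached to generators whose indices differ by at least $2$ act on disjoint pairs of coordinates; and that if the restriction $\mu:=\rho|_{B_n}$ is trivial then $M=I_2$, which by relation (\ref{eqs6}) (using $n\geq3$) forces $N=I_2$, so that $\rho$ is trivial. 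Hence for non-trivial $\rho$ the block $M$ defines a non-trivial homogeneous local representation of $B_n$.

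Next I would invoke Theorem \ref{ThMi}. Relation (\ref{eqs1}) is exactly the requirement that $\mu$ is a homogeneous local representation of $B_n$, so, after replacing $\rho$ by an equivalent representation via a conjugation that preserves the homogeneous local form (a block-diagonal scaling, which acts on $M$ and on $N$ by conjugation by the same $\mathrm{diag}(1,r)$), we may assume $M$ equals the $2\times2$ block of $\mu_1$, $\mu_2$, or $\mu_3$. It then remains only to impose relations (\ref{eqs5}), (\ref{eqs6}), (\ref{eqs7}), and I would recast these in the three-strand local picture: on the coordinates $\{i,i+1,i+2\}$, $\rho(\sigma_i)$ is the matrix with $M$ in the upper-left $2\times2$ block and $1$ in the bottom-right entry, $\rho(\sigma_{i+1})$ is the matrix with $1$ in the top-left entry and $M$ in the lower-right $2\times2$ block, and similarly for $\tau_i,\tau_{i+1}$ with $N$. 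Then relation (\ref{eqs5}) becomes $MN=NM$, while (\ref{eqs6}) and (\ref{eqs7}) become two explicit $3\times3$ matrix identities in the entries of $M$ and $N$.

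Then comes the case analysis. Each of the three candidate blocks $M$ is non-scalar, so $MN=NM$ forces $N=\alpha I_2+\beta M$ for some scalars $\alpha,\beta$. When $M$ is the block of $\mu_3$, this already exhibits $N$ in the stated form (with $x=\alpha$ and $y=\beta b$), and I would verify that (\ref{eqs6}) and (\ref{eqs7}) are then identically satisfied, giving the two-parameter family $\rho_3$. When $M$ is the block of $\mu_1$ or $\mu_2$, substituting $N=\alpha I_2+\beta M$ into the three-strand form of (\ref{eqs6}) produces exactly one further scalar equation, $\alpha+\beta=1$, which cuts the commutant down to the one-parameter families $\rho_1$ and $\rho_2$ (with $t=\alpha$, respectively $x=\alpha$); the slight overlaps between the three lists at the extreme values of their parameters are reconciled by comparing the stated inequality constraints (for instance, the $\mu_1$-block with $a=0$ is already a $\mu_3$-block). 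Finally I would confirm by a direct block computation that each of $\rho_1,\rho_2,\rho_3$ does satisfy all of (\ref{eqs1})--(\ref{eqs7}), so the list is exhaustive and every entry is a genuine representation.

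I expect the main obstacle to be the honest treatment of relations (\ref{eqs6}) and (\ref{eqs7}): these are the only relations that genuinely couple $M$ and $N$, and one has to extract from them precisely the constraint $\alpha+\beta=1$ in the $\mu_1$ and $\mu_2$ cases (neither missing it nor over-constraining) while simultaneously tracking the residual conjugation freedom so that the final answer appears in the normalized shapes displayed in the statement rather than in a messier equivalent guise.
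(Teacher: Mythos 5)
This statement is quoted from \cite{Ma} and the paper offers no proof of it at all --- it is imported as a known classification --- so there is no internal argument to compare yours against. Your reconstruction is essentially correct and matches what the cited classification must do. The reductions are right: relations (\ref{eqs2})--(\ref{eqs4}) are automatic for local blocks, relation (\ref{eqs1}) alone puts the $\sigma$-block $M$ into one of Mikhalchishina's three families, relation (\ref{eqs5}) is exactly $MN=NM$, and since each normalized $M$ is non-scalar (hence non-derogatory in size $2$), its commutant is $\{\alpha I_2+\beta M\}$. I checked the remaining computations: for the $\mu_3$-block, relations (\ref{eqs6}) and (\ref{eqs7}) hold identically for every $N=\alpha I_2+\beta M$, recovering the two-parameter family $\rho_3$; for the $\mu_1$-block, both (\ref{eqs6}) and (\ref{eqs7}) reduce to $a(\alpha+\beta)=a$, i.e.\ $\alpha+\beta=1$ when $a\neq 0$ (and when $a=0$ the $\mu_1$-block is a $\mu_3$-block with $bc=1$, which explains the extra hypothesis $a\neq 0$ in item (1)); the $\mu_2$ case is symmetric, and $M=I_2$ forces $N=I_2$ via (\ref{eqs6}) for $n\geq 3$. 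The one point you should make fully explicit rather than parenthetical is the normalization step: Theorem \ref{ThMi} only asserts equivalence of the restriction to $B_n$ with some $\mu_j$, and to transport that normalization to the pair $(M,N)$ you must realize it by a conjugation preserving the homogeneous local shape, i.e.\ by $\mathrm{diag}(1,r,r^2,\ldots,r^{n-1})$, which acts on both blocks by conjugation by $\mathrm{diag}(1,r)$ and stabilizes each of the three families. With that spelled out, your argument is a complete and correct proof of the cited theorem.
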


Now, we prove similar results as in Theorems \ref{ThCh1} and \ref{ThCh2} for the representations $\rho_1, \rho_2,$ and $\rho_3$ given in Theorem \ref{TaMa}. We follow the same strategy used by Chreif and Dally in  \cite{M.Ch}.

\begin{theorem} \label{Thhhhh}
Set $n\geq 3$. The homogeneous local representations of $SB_n$ of the form $\rho_1$ and $\rho_2$ defined in Theorem \ref{TaMa} are reducible.
\end{theorem}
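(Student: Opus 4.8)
The plan is to exhibit, for each of $\rho_1$ and $\rho_2$, an explicit proper nonzero subspace of $\mathbb{C}^n$ that is invariant under every $\rho(\sigma_i)$ and every $\rho(\tau_i)$, mimicking the argument of Chreif and Dally for $\mu_1,\mu_2$ in Theorem \ref{ThCh1}. Since $\rho_j(\sigma_i)=\mu_j(\sigma_i)$ for $j=1,2$, the subspaces that work for the $B_n$-representations $\mu_1,\mu_2$ are already invariant under all the $\rho_j(\sigma_i)$; the only new thing to check is invariance under the singular generators $\rho_j(\tau_i)$. So the real content is: find the invariant subspace used in \cite{M.Ch} and verify the extra $\tau_i$-invariance, which is a short linear-algebra check because each $\rho_j(\tau_i)$ differs from the identity only in a $2\times2$ block in rows/columns $i,i+1$.

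First I would recall (or re-derive) the invariant subspace in the $\mu_1$ case. For $\mu_1(\sigma_i)$, acting in the block spanned by $e_i,e_{i+1}$ by $\begin{pmatrix} a & \frac{1-a}{c}\\ c & 0\end{pmatrix}$, one checks that the vector $w=c\,e_1 + c\,e_2 + \cdots + c\,e_n$ — equivalently, after rescaling, $v=e_1+e_2+\cdots+e_n$ — or rather the appropriate one-dimensional span coming from the common eigenvector structure, is fixed up to scalar by every $\mu_1(\sigma_i)$; concretely, the line through $v=(1,1,\dots,1)^T$ satisfies $\mu_1(\sigma_i)v = v$ when $a+\frac{1-a}{c}\cdot\frac{1}{?}$... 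I would instead use the hyperplane description: the subspace $W=\{x : x_1+x_2+\cdots+x_n \text{ satisfies a fixed linear relation}\}$ that \cite{M.Ch} uses. In any case, once the correct $W$ (dimension $1$ or $n-1$) is identified from Theorem \ref{ThCh1}, I verify $\rho_1(\tau_i)W\subseteq W$ by noting that in the block spanned by $e_i,e_{i+1}$ the matrix is $\begin{pmatrix}1-(1-a)(1-t) & \frac{1-a}{c}(1-t)\\ c(1-t) & t\end{pmatrix} = t\,I_2 + (1-t)\begin{pmatrix} a & \frac{1-a}{c}\\ c & 0\end{pmatrix}$, i.e. $\rho_1(\tau_i) = t\,I_n + (1-t)\,\mu_1(\sigma_i)$. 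Hence any subspace invariant under $\mu_1(\sigma_i)$ is automatically invariant under $\rho_1(\tau_i)$, and Theorem \ref{ThCh1} applies verbatim to give reducibility of $\rho_1$.

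For $\rho_2$ the same idea works: the $2\times2$ block of $\rho_2(\tau_i)$ is $\begin{pmatrix} x & \frac{1-d}{c}(1-x)\\ c(1-x) & 1-(1-d)(1-x)\end{pmatrix} = x\,I_2 + (1-x)\begin{pmatrix}0 & \frac{1-d}{c}\\ c & d\end{pmatrix}$, so $\rho_2(\tau_i) = x\,I_n + (1-x)\,\mu_2(\sigma_i)$, and every $\mu_2$-invariant subspace is $\rho_2(\tau_i)$-invariant. Combining with Theorem \ref{ThCh1}, which provides a proper nonzero $\mu_2$-invariant subspace, yields that $\rho_2$ is reducible. I would write the proof as: (i) observe the two displayed identities $\rho_1(\tau_i)=tI_n+(1-t)\mu_1(\sigma_i)$ and $\rho_2(\tau_i)=xI_n+(1-x)\mu_2(\sigma_i)$; (ii) conclude that any $\mu_j$-invariant subspace is $\rho_j$-invariant; (iii) invoke Theorem \ref{ThCh1}.

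The only genuine obstacle is bookkeeping: one must be sure the subspace from \cite{M.Ch} is invariant under \emph{all} generators simultaneously (not just one block), but since that is exactly what Theorem \ref{ThCh1} already asserts for the $\sigma_i$'s and the identities above transfer it to the $\tau_i$'s with no further work, there is no real difficulty. A minor caveat to address in the writeup: the statement of the theorem refers to "the form $\rho_1$ and $\rho_2$ defined in Theorem \ref{ThMi}" — this is a typo for Theorem \ref{TaMa}, where $\rho_1$ and $\rho_2$ are actually defined; I would state it using the correct reference.
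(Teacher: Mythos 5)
Your proof is correct, but it takes a different route from the paper. The paper conjugates $\rho_1$ by the diagonal matrix $P=\mathrm{diag}(c^{1-n},\dots,c,1)$ and then exhibits the explicit common fixed vector $(1,1,\dots,1)^T$ of all the $\hat\rho_1(\sigma_i)$ and $\hat\rho_1(\tau_i)$ (and says the $\rho_2$ case is similar). You instead observe the identities $\rho_1(\tau_i)=tI_n+(1-t)\mu_1(\sigma_i)$ and $\rho_2(\tau_i)=xI_n+(1-x)\mu_2(\sigma_i)$ — both of which I have checked and which do hold — so that every subspace invariant under the $\mu_j(\sigma_i)$ is automatically invariant under the $\rho_j(\tau_i)$, and then you invoke Theorem \ref{ThCh1} as a black box. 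Your reduction is arguably cleaner: it needs no conjugation, no explicit invariant vector, and it handles $\rho_1$ and $\rho_2$ uniformly; in fact it amounts to recognizing $\rho_1$ and $\rho_2$ as $\Phi$-type extensions $\Phi_{1-t,0,t}$ and $\Phi_{1-x,0,x}$ of $\mu_1$ and $\mu_2$, so the paper's own Proposition on $\Phi_{a,b,c}$ together with Theorem \ref{ThCh1} would also yield the result. What the paper's computation buys in exchange is self-containedness (an explicit invariant line rather than a citation). The rambling middle of your writeup, where you try to reconstruct the invariant subspace of \cite{M.Ch}, is unnecessary and should be cut, since your final three-step argument never uses it; and you are right that the theorem's reference to Theorem \ref{ThMi} should be to Theorem \ref{TaMa}.
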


\begin{proof}
We prove it just for $\rho_1$, and the proof will be similar for $\rho_2$. Consider the representation $\rho_1$ defined in Theorem \ref{TaMa}. Consider the diagonal matrix $P=\text{diag}(c^{1-n},c^{2-n},\ldots, c,1)$, where $\text{diag}(x_1,x_2,\dots,x_n)$ is a diagonal $n\times n$ matrix with $x_{ii}=x_i$. Consider the equivalent representation $\hat{\rho}_1$ of $\rho_1$ given by: $\hat{\rho}_1(\sigma_i)=P^{-1}\rho_1(\sigma_i)P$ and $\hat{\rho}_1(\tau_i)=P^{-1}\rho_1(\tau_i)P$ for all $1\leq i \leq n-1$. Direct computations imply that 
$$\hat{\rho}_1(\sigma_i) =\left( \begin{array}{c|@{}c|c@{}}
   \begin{matrix}
     I_{i-1} 
   \end{matrix} 
      & 0 & 0 \\
      \hline
    0 &\hspace{0.2cm} \begin{matrix}
   		a & 1-a\\
   		1 & 0
   		\end{matrix}  & 0  \\
\hline
0 & 0 & I_{n-i-1}
\end{array} \right)$$
and
$$\hat{\rho}_1(\tau_i) =\left( \begin{array}{c|@{}c|c@{}}
   \begin{matrix}
     I_{i-1} 
   \end{matrix} 
      & 0 & 0 \\
      \hline
    0 &\hspace{0.2cm} \begin{matrix}
   		 1-(1-a)(1-t) & (1-a)(1-t)\\
   		(1-t) & t
   		\end{matrix}  & 0  \\
\hline
0 & 0 & I_{n-i-1}
\end{array} \right),$$
$\text{where} \hspace*{0.15cm} a, c, t \in \mathbb{C},a\neq 0, c\neq 0, \hspace*{0.15cm} \text{for} \hspace*{0.15cm} 1\leq i\leq n-1$. Now, we can easily see that the column vector $(1,1,\ldots,1)^T$ is invariant under $\hat{\rho}_1(\sigma_i)$ and $\hat{\rho}_1(\tau_i)$, for all $1\leq i \leq n-1$. Therefore, $\hat{\rho}_1$ is reducible, and so $\rho_1$ is reducible.
\end{proof}

\begin{theorem}\label{Thlast}
Set $n\geq 3$. The homogeneous local representations of the form $\rho_3$ defined in Theorem \ref{TaMa} are irreducible if $bc\neq 1$. Moreover, if $bc=1$ then $\rho_3$ is irreducible if and only if $x+\frac{y}{b}\neq 1$.
\end{theorem}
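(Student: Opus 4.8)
The case $bc\neq 1$ is immediate. Since $\rho_3(\sigma_i)=\mu_3(\sigma_i)$ for every $i$, the restriction of $\rho_3$ to the copy of $B_n$ generated by $\sigma_1,\dots,\sigma_{n-1}$ is exactly $\mu_3$, and $\mu_3$ is irreducible whenever $bc\neq 1$ by Theorem \ref{ThCh2}. Lemma \ref{lemmaaaa} then forces $\rho_3$ to be irreducible, with no condition on $x$ or $y$. So all the content lies in the case $bc=1$, which I would handle as follows.

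Assume $bc=1$, so $c=b^{-1}$. Exactly in the style of the proof of Theorem \ref{Thhhhh}, pass to the equivalent representation $\hat\rho_3(g)=P^{-1}\rho_3(g)P$ with $P=\text{diag}(1,b^{-1},b^{-2},\dots,b^{1-n})$. A direct computation on the $2\times 2$ blocks (using $bc=1$) shows that $\hat\rho_3(\sigma_i)$ carries the block $\left(\begin{smallmatrix}0&1\\1&0\end{smallmatrix}\right)$ in rows and columns $i,i+1$, i.e.\ it is precisely the permutation matrix of the transposition swapping coordinates $i$ and $i+1$, while $\hat\rho_3(\tau_i)$ carries the block $\left(\begin{smallmatrix}x&s\\ s&x\end{smallmatrix}\right)$ there, where $s:=y/b$. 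In particular the restriction of $\hat\rho_3$ to $B_n$ is the permutation representation of $S_n$ on $\mathbb{C}^n$, pulled back along the canonical surjection $B_n\twoheadrightarrow S_n$.

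Next, invoke the standard fact that for $n\geq 3$ the $S_n$-module $\mathbb{C}^n$ decomposes as $U\oplus V$, where $U=\langle(1,\dots,1)^T\rangle$ is the trivial module and $V=\{w\in\mathbb{C}^n:\sum_j w_j=0\}$ is the standard $(n-1)$-dimensional irreducible module, with $V\not\cong U$. Since $\mathbb{C}[S_n]$ is semisimple by Maschke's theorem, the only $S_n$-submodules of $\mathbb{C}^n$ are $0,U,V,\mathbb{C}^n$; equivalently, these are the only subspaces invariant under all $\hat\rho_3(\sigma_i)$. Hence any proper nonzero $\hat\rho_3$-invariant subspace must equal $U$ or $V$, and it remains only to test invariance of $U$ and $V$ under the $\hat\rho_3(\tau_i)$.

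That test is a one-line calculation in each case. For $v=(1,\dots,1)^T$ one finds $\hat\rho_3(\tau_i)v=v+(x+s-1)(e_i+e_{i+1})$; since $n\geq 3$ the vector $e_i+e_{i+1}$ is not a scalar multiple of $v$, so $U$ is invariant under every $\hat\rho_3(\tau_i)$ if and only if $x+s=1$. For $w\in V$, the coordinate sum of $\hat\rho_3(\tau_i)w$ equals $(x+s-1)(w_i+w_{i+1})$, and taking $w=e_i-e_k$ with $k\notin\{i,i+1\}$ (which lies in $V$ and has $w_i+w_{i+1}=1$) shows that $V$ is invariant under every $\hat\rho_3(\tau_i)$ if and only if $x+s=1$ as well. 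Consequently, when $bc=1$, $\rho_3$ is reducible precisely when $x+s=x+\tfrac{y}{b}=1$, which is the assertion. The only points requiring care are the bookkeeping in the conjugation step and the essential use of $n\geq 3$ (so that $e_i+e_{i+1}$ is not proportional to $(1,\dots,1)^T$); everything else is routine.
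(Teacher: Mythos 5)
Your proposal is correct, and while the case $bc\neq 1$ coincides exactly with the paper's argument (the restriction to $B_n$ is $\mu_3$, which is irreducible by Theorem \ref{ThCh2}, and Lemma \ref{lemmaaaa} finishes), your treatment of the case $bc=1$ takes a genuinely different route. The paper argues by hand: assuming a nonzero proper invariant subspace $V$, it splits into the subcases $V=\langle(1,\dots,1)^T\rangle$ (where applying $\hat\rho_3(\tau_1)-I$ produces the vector $(1,1,0,\dots,0)^T\in V$, a contradiction) and $V\neq\langle(1,\dots,1)^T\rangle$ (where a chain of explicit computations with $\hat\rho_3(\sigma_1),\hat\rho_3(\sigma_2),\hat\rho_3(\tau_2)$ produces first $e_1-e_2$, then $e_3$, and then, by pushing $e_3$ around with the $\hat\rho_3(\sigma_j)$, all of the $e_i$, forcing $V=\mathbb{C}^n$). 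You instead observe that after conjugation the $\hat\rho_3(\sigma_i)$ realize the permutation representation of $S_n$, invoke Maschke together with the multiplicity-free decomposition $\mathbb{C}^n=U\oplus V$ into the trivial and standard modules to conclude that the only candidate invariant subspaces are $U$ and $V$, and then test each against the $\hat\rho_3(\tau_i)$ by a one-line computation. Both arguments are valid; yours is shorter, more structural, and makes transparent that $U$ and $V$ become invariant simultaneously exactly when $x+\frac{y}{b}=1$, while the paper's is entirely elementary, using nothing beyond explicit linear algebra. Your computations check out: $\hat\rho_3(\tau_i)v=v+(x+s-1)(e_i+e_{i+1})$ for $v=(1,\dots,1)^T$, and the coordinate sum of $\hat\rho_3(\tau_i)w$ equals $(x+s-1)(w_i+w_{i+1})$ for $w$ of coordinate sum zero; the test vector $e_i-e_k$ with $k\notin\{i,i+1\}$ exists precisely because $n\geq 3$, which is also where $e_i+e_{i+1}\notin U$ is needed, as you correctly flag.
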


\begin{proof}
Following the same procedure as in the proof of Theorem \ref{Thhhhh}, we can see that $\rho_3$ is equivalent to the representation $\hat{\rho}_3$, which is defined as follows.
$$\hat{\rho}_3(\sigma_i) =\left( \begin{array}{c|@{}c|c@{}}
   \begin{matrix}
     I_{i-1} 
   \end{matrix} 
      & 0 & 0 \\
      \hline
    0 &\hspace{0.2cm} \begin{matrix}
   		0 & bc\\
   		1 & 0
   		\end{matrix}  & 0  \\
\hline
0 & 0 & I_{n-i-1}
\end{array} \right)$$
and
$$\hat{\rho}_3(\tau_i) =\left( \begin{array}{c|@{}c|c@{}}
   \begin{matrix}
     I_{i-1} 
   \end{matrix} 
      & 0 & 0 \\
      \hline
    0 &\hspace{0.2cm} \begin{matrix}
   		 x & cy\\
   		\frac{y}{b} & x
   		\end{matrix}  & 0  \\
\hline
0 & 0 & I_{n-i-1}
\end{array} \right),$$
$\text{where} \hspace*{0.15cm} b, c, x, y \in \mathbb{C}, b\neq 0, c\neq 0, \hspace*{0.15cm} \text{for} \hspace*{0.15cm} 1\leq i\leq n-1$.

\vspace*{0.1cm}

If $bc\neq 1$ then the restriction of $\rho_3$ to $B_n$ is irreducible by Theorem \ref{ThCh2}. Thus, by Lemma \ref{lemmaaaa}, we get that $\rho_3$ is irreducible. Now, suppose that $bc=1$. Then, the representation $\hat{\rho}_3$ will be as follows.
$$\hat{\rho}_3(\sigma_i) =\left( \begin{array}{c|@{}c|c@{}}
   \begin{matrix}
     I_{i-1} 
   \end{matrix} 
      & 0 & 0 \\
      \hline
    0 &\hspace{0.2cm} \begin{matrix}
   		0 & 1\\
   		1 & 0
   		\end{matrix}  & 0  \\
\hline
0 & 0 & I_{n-i-1}
\end{array} \right)$$
and
$$\hat{\rho}_3(\tau_i) =\left( \begin{array}{c|@{}c|c@{}}
   \begin{matrix}
     I_{i-1} 
   \end{matrix} 
      & 0 & 0 \\
      \hline
    0 &\hspace{0.2cm} \begin{matrix}
   		 x & \frac{y}{b}\\
   		\frac{y}{b} & x
   		\end{matrix}  & 0  \\
\hline
0 & 0 & I_{n-i-1}
\end{array} \right),$$
$\text{where} \hspace*{0.15cm} b, x, y \in \mathbb{C}, b\neq 0, \hspace*{0.15cm} \text{for} \hspace*{0.15cm} 1\leq i\leq n-1$. We consider two cases in the following.
\begin{itemize}
\item[(1)] Suppose first that $x+\frac{y}{b}=1$. In this case, the column vector $(1,1,\ldots,1)^T$ is invariant under $\hat{\rho}_3(\sigma_i)$ and $\hat{\rho}_3(\tau_i)$, for all $1\leq i \leq n-1$. Therefore, $\hat{\rho}_3$ is reducible, and so $\rho_3$ is reducible.
\item[(2)] Now, suppose that $x+\frac{y}{b}\neq 1$. Suppose to get a contradiction that $\rho_3$ is reducible, then $\hat{\rho}_3$ will also be reducible. Let $V$ be a non-trivial vector subspace of $\mathbb{C}^n$ that is invariant under $\hat{\rho}_3$. We consider two subcases in the following.
\begin{itemize}
\item[•] The first subcase is when $V=<v>$, where $v=(1,1,\ldots,1)^T$. As $V$ is invariant under $\hat{\rho}_3$, we get $\hat{\rho}_3(\tau_1)(v)-v=(x+\frac{y}{b}-1)(1,1,0,\ldots,0)^T \in V$ with $x+\frac{y}{b}-1\neq 0$. So, the vector $(1,1,0,\ldots,0)^T \in V$, a contradiction.
\item[•] The second subcase is when $V\neq <v>$, where $v=(1,1,\ldots,1)^T$. In this case we may pick $v=(v_1,v_2,\ldots,v_n)^T\in V$ such that $v_i \neq v_{i+1}$ for some $1\leq i \leq n-1$. Without loss of generality suppose that $i=1$; that is $v_1\neq v_2$. We have $\hat{\rho}_3(\sigma_1)(v)-v=(v_2-v_1)(1,-1,0,\ldots,0)^T \in V$, with $v_2-v_1\neq 0$, which gives that $u=(1,-1,0,\ldots,0)^T \in V$. Then, we can see that $w=\hat{\rho}_3(\sigma_2)(u)-u=(0,1,-1,0,\ldots,0)^T \in V$. Thus, we have that $\hat{\rho}_3(\tau_2)(u)-u+(x-1)w=(1-x-\frac{y}{b})(0,0,1,0,\ldots,0)^T\in V$ with $(1-x-\frac{y}{b})\neq 0$, and so the column vector $e_3 \in V$, where $\{e_1,e_2,\ldots, e_n\}$ is the canonical basis of $\mathbb{C}^n$. Now, as $e_3 \in V$, we get that $\hat{\rho}_3(\sigma_2)(e_3)=e_2 \in V$ and so  $\hat{\rho}_3(\sigma_1)(e_2)=e_1 \in V$. Similarly, we have that $\hat{\rho}_3(\sigma_3)(e_3)=e_4 \in V$ and so $\hat{\rho}_3(\sigma_4)(e_4)=e_5 \in V$. If we continue till the end, we get that $\hat{\rho}_3(\sigma_{n-1})(e_{n-1})=e_n \in V$. Therefore, $e_i \in V$ for all $1\leq i \leq n$, and so $V=\mathbb{C}^n$, a contradiction.
\end{itemize}
\end{itemize}
\end{proof}
  
\noindent\textbf{Data Availability:} Not Applicable.

\vspace*{0.1cm}

\noindent\textbf{Funding:} Not Applicable.

\vspace*{0.1cm}

\noindent\textbf{Ethical Approval Declaration:} No ethical concerns were identified for this study.

\vspace*{0.1cm}

\noindent\textbf{Conflict of Interest:} The author declares there is no conflict of interest.

%%%%%%%%%%% REFERENCES %%%%%%%%%%%%%%%%

\end{document}